\numberwithin{equation}{section}
\theoremstyle{plain}
\newtheorem{Th}{Theorem}[section]
\newtheorem{Lemma}[Th]{Lemma}
\newtheorem{Conj}[Th]{Conjecture}
\newtheorem{Cor}[Th]{Corollary}
\newtheorem{Prop}[Th]{Proposition}
\newtheorem{Def}[Th]{Definition}
\newcommand{\Q}{\mathbb{Q}}
\newcommand{\R}{\mathbb{R}}
\newcommand{\Z}{\mathbb{Z}}
\newcommand{\F}{\mathbb{F}}
\newcommand{\LF}{\mathrm{L}}
\newcommand{\V}{\mathrm{V}}
\newcommand{\ord}{\operatorname{ord}}
\newcommand{\Vol}{\operatorname{Vol}}
\newcommand{\Tr}{\operatorname{Tr}}
\begin{document}

\title{L-functions of Certain Exponential Sums over Finite Fields}

\author{Chao Chen}
\address{Department of Mathematics, University of California, Irvine, 
Irvine, CA 92697-3875, USA}
\email{chaoc12@uci.edu}

\author{Xin Lin}
\address{Department of Mathematics, Shanghai Maritime University, Shanghai 201306, PR China}
\email{xlin1126@hotmail.com}
\thanks{X. Lin is supported by China Scholarship Council(No. 201906970038) and National Natural Science Foundation of China(No. 11771351, 11871317).}

\subjclass[2010]{Primary 11S40, 11T23, 11L07}

\keywords{Exponential sums, L-function, Laurent polynomials, Newton polygon, Hodge polygon, Decomposition theory, Weight computation}

\begin{abstract}In this paper, we completely determine the slopes and weights of the L-functions of an important class of exponential sums arising from analytic number theory. Our main tools include Adolphson-Sperber's work on toric exponential sums and Wan's decomposition theorems. One consequence of our main result is a sharp estimate of these exponential sums. Another consequence is to obtain an explicit counterexample of  Adolphson-Sperber's conjecture on weights of toric exponential sums.
\end{abstract}

\maketitle

\section{Introduction}\label{sec1}
Let $\F_{q}$ be the finite field of $q$ elements with characteristic $p$. For each positive integer $k$, let $\F_{q^k}$ denote the degree $k$ finite extension of $\F_{q}$ and $\F_{q^k}^*$ denote the set of non-zero elements in $\F_{q}$. Let $\psi: \F_{p} \rightarrow \mathbb{C}^*$ be a fixed nontrivial additive character over $\F_{p}$. In this paper, we are concerned with estimating the following exponential sum
\begin{align*}
S_k(\vec{a}) =S_k(a_1,a_2, \ldots,a_6)=\sum_{\substack{{\frac{a_5}{x_1x_2}+\frac{a_6}{x_3x_4}=1}\\x_i\in \F^*_{q^k}}}\psi (\Tr_k(a_1x_1+a_2x_2+a_3x_3+a_4x_4)),
\end{align*}
where $\Tr_k: \F_{q^k} \rightarrow \F_{p}$ is the trace map and $a_i\in \F_{q}^*,\ i=1,2,\ldots,6$.

In the past few decades, there have been extensive study and application of the exponential sum $S_k(\vec{a})$. For instance, using Deligne's theorem on weights, Birch and Bombieri proved that if $p>c_0$, then $|S_k(\vec{a})|\leq c_1q^{3k/2}$ for some absolute constants $c_0$,$c_1$ \cite{birch1985some}. Their estimate is a crucial ingredient in Heath Brown's work on the divisor function $d_3(n)$ in arithmetic progressions\cite{heath1986divisor}. Birch and Bombieri's result also plays a vital role in Friedlander and Iwaniec's work on estimating certain averages of incomplete Kloosterman sums in application to the divisor problem of $d_3(n)$\cite{friedlander1985incomplete}. Relying on Friedlander-Iwaniec's and Birch-Bombieri's results of $S_k(\vec{a})$, Zhang gained a boundary of the error terms in his work on twin prime conjecture\cite{zhang2014bounded}. A consequence of our theorem is a sharp estimate: for all $p,k$ and $\vec{a}$, 
\begin{equation*}
|S_k(\vec{a})|\leq6q^{3k/2}+q^{k}+1.
\end{equation*}

Let
\begin{align}\label{eq11}
\LF(\vec{a},T)=\exp  \left(\sum^\infty _ {k=1} S_k(\vec{a}) \frac{T^k}{k}\right).
\end{align}
Our main result in this paper is a complete determination of the weights and slopes of the L-function attached to the sequence of exponential sums $S_k(\vec{a})$ ($k=1,2,\ldots$). 
\begin{Th}\label{thm1}
	The generating L-function of  $S_k(\vec{a})$ is a polynomial of degree 8, i.e.,
	\begin{equation*}
	\LF(\vec{a},T)=(1-T)(1-qT)\prod_{i=1}^6(1-\alpha_iT),
	\end{equation*}
	where $\alpha_i \in \overline{\Q}$ with the complex absolute value $|\alpha_i|=q^{3/2}$.	
	If we view the reciprocal roots $\alpha_i$ as $p$-adic numbers and enumerate them with respect to the $q$-adic slopes, their $p$-adic norms are given by
	\begin{align*}
		|\alpha_i|_p=
		\begin{cases}
			1 &\text{if}\quad i =1.\\
			q^{-1} &\text{if}\quad i= 2,3.\\
			q^{-2} &\text{if}\quad i= 4,5.\\
			q^{-3} &\text{if}\quad i= 6.
		\end{cases}
	\end{align*}
\end{Th}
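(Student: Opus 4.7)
The plan is to rewrite $S_k(\vec a)$ as a toric exponential sum in one auxiliary variable, apply Adolphson-Sperber to the enlarged Newton polytope, and then use Wan's decomposition theorems to isolate the trivial factors $(1-T)(1-qT)$ and to pin down the slopes and weights of the remaining degree-six piece.

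First I would dissolve the defining constraint by additive orthogonality. Using $\sum_{t\in \F_{q^k}}\psi(\Tr_k(tb))=q^k\mathbf{1}_{b=0}$, one has
$$S_k(\vec a)=\frac{1}{q^k}\sum_{t\in \F_{q^k}}\sum_{x_i\in \F_{q^k}^*}\psi\!\left(\Tr_k\!\left(\sum_{j=1}^{4}a_j x_j+t\Big(\tfrac{a_5}{x_1x_2}+\tfrac{a_6}{x_3x_4}-1\Big)\right)\right).$$
The $t=0$ contribution to the inner sum factors as $\prod_{j=1}^{4}\sum_{x_j\in \F_{q^k}^*}\psi(\Tr_k(a_j x_j))=(-1)^4=1$, so $q^k S_k(\vec a)=1+S_k^{\circ}(\vec a)$, where $S_k^{\circ}(\vec a)$ is the toric exponential sum on $(\F_{q^k}^*)^5$ attached to the Laurent polynomial
$$f(x_1,x_2,x_3,x_4,t)=a_1 x_1+a_2 x_2+a_3 x_3+a_4 x_4+\frac{a_5 t}{x_1 x_2}+\frac{a_6 t}{x_3 x_4}-t.$$
The L-function relation $\LF(\vec a,T)=(1-T/q)^{-1}\LF^{\circ}(\vec a,T/q)$ then reduces the theorem to the corresponding assertion for $\LF^{\circ}(\vec a,T)$: a polynomial of degree $9$ with three low-weight reciprocal roots $1,q,q^2$ and six middle-weight roots $q\alpha_i$.

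Next I would verify $\Delta$-regularity of $f$ with respect to $\Delta=\Delta(f)\subset\R^5$ on every face not meeting the origin. A direct enumeration of the top facets, which are cut out by hyperplanes of the form $\varepsilon_1 x_1+\varepsilon_2 x_2+\varepsilon_3 x_3+\varepsilon_4 x_4+t=1$ for admissible sign patterns $\varepsilon_i\in\{\pm 1\}$ (producing nine 4-simplices of normalized lattice volume one), together with the bottom facet $\{t=0\}$, gives $\Vol(\Delta)=9/120=3/40$ via pyramid decomposition from the origin. Since every $a_i$ is a unit in $\F_q$, nondegeneracy on each such facial polynomial follows from an inspection of its partial logarithmic derivatives, uniformly in $\vec a$ and in $p$. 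Adolphson-Sperber then yields that $\LF^{\circ}(\vec a,T)$ is a polynomial of degree $5!\,\Vol(\Delta)=9$ whose $p$-adic Newton polygon lies on or above the combinatorial Hodge polygon attached to $\Delta$.

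Finally I would apply Wan's facial decomposition theorem to factor $\LF^{\circ}$ as an alternating product of L-functions attached to the faces of $\Delta$. The three low-weight roots $1,q,q^2$ will emerge from lower-dimensional boundary pieces (transparently, the bottom facet $\{t=0\}$ contributes the reciprocal root $1$ through the trivial sum $(-1)^4=1$), while the remaining degree-six factor carries the pure middle cohomology. The Adolphson-Sperber lattice-point generating function on $\Delta$ predicts a Hodge polygon with slope sequence $(1,2,2,3,3,4)$ on this middle piece of $\LF^{\circ}$, equivalently $(0,1,1,2,2,3)$ on the corresponding factor of $\LF(\vec a,T)$. The main obstacle will be to verify that the Newton polygon coincides with the Hodge polygon for every residue characteristic $p$, i.e.\ that $f$ is generically ordinary in Wan's sense; since the exponents of $f$ lie in $\{-1,0,1\}$ this should reduce to the nonvanishing of an explicit Hasse polynomial in the coefficients $a_i$ on each star-face of $\Delta$. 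Purity $|\alpha_i|=q^{3/2}$ for the six middle roots then follows from Deligne's Weil~II applied to the smooth affine threefold defined by $a_5/(x_1x_2)+a_6/(x_3x_4)=1$.
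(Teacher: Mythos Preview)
Your overall strategy---pass to the five-variable toric sum, invoke Adolphson--Sperber, and use Wan's machinery---is exactly the paper's approach, and your identification of the nine unimodular top facets and the resulting Hodge slope sequence $(1,2,2,3,3,4)$ on the middle piece of $\LF^{\circ}$ is correct. But several steps are mis-specified in ways that would stall the argument.

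First, you conflate Wan's two decomposition theorems. The \emph{facial} decomposition does not factor $\LF^{\circ}$ into face L-functions; it says only that $f$ is ordinary iff each restriction $f^{\delta_i}$ is ordinary. Here each $f^{\delta_i}$ is diagonal with $|\det M(\delta_i)|=1$, so by Wan's diagonal criterion (the invariant-factor group $S(\Delta_i)$ is trivial, hence $d_n=1$) one gets ordinariness \emph{for every} $p$ and every $\vec a$ immediately---no Hasse-polynomial argument is needed, and ``generically ordinary'' would not suffice for the theorem as stated. Second, the trivial reciprocal roots $1,q,q^2$ of $\LF^{\circ}$ are not produced by the facet $\{t=0\}$ (which contains the origin and plays no role in Adolphson--Sperber's face-by-face nondegeneracy or in the facial decomposition). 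They arise from Wan's \emph{boundary} decomposition of the Fredholm determinant $\det(I-TA_a(f))$: the origin gives the factor $(1-T)$, the six edge-cones through the origin contribute $(1-qT)^6$, the fifteen $2$-cones contribute $(1-q^2T)^{15}$, and feeding this into the alternating formula $\LF^{\circ}(T)=\prod_i \det(I-Tq^iA_a)^{(-1)^i\binom{5}{i}}$ isolates $(1-T)(1-qT)(1-q^2T)$ after a short contradiction argument ruling out $\gamma_1=q$. Finally, the paper does not invoke smoothness of the threefold for purity; once the three integral roots are known, the remaining six $\beta_i$ are non-real (by the slope pattern and the oddness of $g$), and $\sum_{i=1}^6 \ord_q\beta_i = 15$ together with $\ord_q\beta_i+\ord_q\overline{\beta_i}=\omega_i\le 5$ forces $\omega_i=5$ for all of them. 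Your Weil~II route is plausible but requires checking smoothness and handling the boundary at infinity, which the arithmetic argument sidesteps entirely.
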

\begin{Cor}
Notations as above. For all $p,k$ and $\vec{a}$, 
\begin{equation*}
|S_k(\vec{a})|\leq6q^{3k/2}+q^k+1.
\end{equation*}
\end{Cor}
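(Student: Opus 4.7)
The plan is to derive the estimate directly from the explicit factorization of $\LF(\vec{a},T)$ provided by Theorem~\ref{thm1}, using the standard link between an L-function and its associated sequence of sums via the logarithmic derivative.

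First I would take the logarithm of the factored form
\[
\LF(\vec{a},T)=(1-T)(1-qT)\prod_{i=1}^{6}(1-\alpha_i T)
\]
and expand each factor as a power series: $\log(1-\beta T)=-\sum_{k\ge 1}\beta^k T^k/k$. Comparing the resulting expression with the defining identity \eqref{eq11}, and equating coefficients of $T^k/k$, I obtain the trace formula
\[
S_k(\vec{a})=-1-q^k-\sum_{i=1}^{6}\alpha_i^k.
\]

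Next I would apply the triangle inequality together with the archimedean estimate $|\alpha_i|=q^{3/2}$ from Theorem~\ref{thm1}, which immediately yields
\[
|S_k(\vec{a})|\le 1+q^k+\sum_{i=1}^{6}|\alpha_i|^k=1+q^k+6q^{3k/2},
\]
as required.

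There is essentially no obstacle here: the corollary is a direct bookkeeping consequence of Theorem~\ref{thm1}, and the only inputs needed are the degree of the L-function, the trivial factors $(1-T)(1-qT)$, and the pure weight $q^{3/2}$ of the remaining reciprocal roots. All the substantive content (the determination of the degree, the weights, and the $p$-adic slopes) is already carried by Theorem~\ref{thm1} itself.
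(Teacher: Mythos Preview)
Your proof is correct and follows exactly the approach implicit in the paper: the corollary is stated there as an immediate consequence of Theorem~\ref{thm1} (restated later after Theorem~\ref{thm5}) without a written-out argument, and the trace formula you derive is precisely the paper's formula~(\ref{expo}) specialized to the polynomial $\LF(\vec{a},T)$. There is nothing to add.
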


Our approach is to reduce this theorem to the L-function of toric exponential sums 
and then apply the systematic results available for such toric L-functions. Consider a Laurent polynomial $g\in \F_{q}[x_1^{\pm1},\ldots, x_5^{\pm1}]$ defined by
\begin{equation*}
g(x_1,x_2,\ldots,x_5)=a_1x_1+a_2x_2+a_3x_3+a_4x_4+x_5\left(\frac{a_5}{x_1x_2}+\frac{a_6}{x_3x_4}-1\right),
\end{equation*}
where $a_i \in \F^*_{q}$ ($i=1,2, \ldots, 6$). For any positive integer $k$, let 
 \begin{equation*}
 S^*_k(g)=\sum_{x_i\in \F^*_{q^k}}\psi(\Tr_k(g))
 \end{equation*}
 be the associated exponential sum. Its generating L-function is defined to be
 \begin{equation*}
 \LF^*(g,T)=\exp  \left(\sum^\infty _ {k=1} S^*_k(g) \frac{T^k}{k}\right).
 \end{equation*}
With Adolphson and Sperber's theorem\cite{AS1989} and Wan's boundary decomposition theorem\cite{Dwan1993}, $\LF^*(g,T)$ is a polynomial of degree 9, i.e., 
\begin{equation*}
\LF^*(g,T)=(1-T)\prod_{i=1}^8(1-\beta_iT),
\end{equation*}
where $\beta_i \in \Z[\zeta_p]$ and $|\beta_i| \leq q^{5/2}$, $i=1,2,\ldots, 8$. 

The following equation describes the relationship between $S_k(\vec{a})$ and  $S^*_k(g)$,
 \begin{equation*}
 S_k(\vec{a})=\frac{1}{q^k}+\frac{1}{q^k}S^*_k(g).
 \end{equation*}
Based on the relationship, it's easy to check that
\begin{align}\label{eq12}
\LF(\vec{a},T)=\frac{1}{1-T/q}\LF^*\left(g,T/q\right)=\prod_{i=1}^8(1-(\beta_i/q)T).
\end{align}
So it suffices to evaluate the nontrivial reciprocal roots of $\LF^*(g,T)$ , denoted by $\beta_i(1\leq i \leq 8)$. To gain the slopes, we use Wan's facial decomposition theorem to compute the $q$-adic Newton polygon of $\LF^*(g,T)$. Wan's boundary decomposition theorem together with the slope information we obtained lead to an explicit expression of $\LF^*(g,T)$,
\begin{equation*}
\LF^*(g,T)=(1-T)(1-qT)(1-q^2T)\prod_{i=1}^6(1-\beta_iT),
\end{equation*}
where $\beta_i \in \Z[\zeta_p]$ and $|\beta_i| = q^{5/2}$, $i=1,2,\ldots, 6$. Our main theorem follows from this result.
For the weight computation of $\LF^*(g,T)$, one can also apply Denef-Loeser's weight formula obtained using intersection cohomology \cite{Denef1991}. This formula becomes combinatorially complicated to use when the dimension $n\geq 5$ and the polytope is not simple at the origin. This is precisely the case 
for our $5$-dimensional example $g$. In contrast to Wan's decomposition theorem, Denef-Loeser's formula can't determine the  two real roots. Moreover, if splitting the exponential sum $S^*_k(g)$ into a product of two Kloosterman sums, one can also obtain the real roots and weight information using Katz's calculation of the local monodromy of the Kloosterman sheaf, which is more advanced \cite{katz}. 

We remark that there is a much simpler weight formula conjectured by Adolphson-Sperber  \cite{Adolphson1990} which is true in many interesting cases including all low dimensional cases $n\leq 4$. This conjectural formula however was disproved by Denef and Loeser\cite{Denef1991} who showed the existence (no construction) of a counterexample in dimension $5$. 
We also test our example using the Adolphson-Sperber formula and found it disagrees with the result obtained using the Wan's decomposition theorem. This means that our $5$-dimensional example provides the first explicit construction of a counterexample to the Adophson-Sperber conjecture. 

This paper is organized as follows. In section \ref{sec2}, we review some technical methods and theorems including Adolphson-Sperber's theorems and Wan's decomposition theorems. In section \ref{sec3}, we prove the main results. In the appendix, we list the vertices of all faces of the polytope that are needed in both the slope computation and the weight computation. These were obtained via a simple computer calculation.

\section{Preliminaries}\label{sec2}
\subsection{Rationality of the generating L-function}\label{subsec21}
For a Laurent polynomial $f\in\F_{q}\left[x_1^{\pm1}, \ldots, x_n^{\pm1} \right]$, the associated exponential sum is defined by
\begin{align}\label{eq1}
S^*_k(f)=\sum_{x_i \in \F^*_{q^k}} \psi(\Tr_k(f)),
\end{align}
where $\Tr_k: \F_{q^k} \rightarrow \F_{p}$ is the trace map and $\psi: \F_{p} \rightarrow \mathbb{C}^*$ is a fixed nontrivial additive character. 
In analytic number theory, it's a classical problem to give a good estimate for the valuations of $S^*_k(f)$. In order to directly compute the absolute values of the exponential sums, we usually study the generating L-function of $S^*_k(f)$ defined as
\begin{equation*}
\LF^*(f,T)=\exp  \left(\sum^\infty _ {k=1} S^*_k (f) \frac{T^k}{k} \right) \in \Q(\zeta_p)[[T]].
\end{equation*}
By a theorem of Dwork-Bombieri-Grothendieck\cite{Dwork1962OnTZ, Grothendieck1964}, the generating L-function is a rational function given by
\begin{equation*}
\LF^*(f,T)=\frac{\prod^{d_1}_{i=1}(1-\alpha_iT)}{\prod^{d_2}_{j=1}(1-\beta_jT)},
\end{equation*}
where all the reciprocal roots and poles are non-zero algebraic integers. After taking logarithmic derivatives, we have the formula
\begin{align}\label{expo}
S_k^*(f)=\sum^{d_2}_{j=1}\beta_j^k-\sum^{d_1}_{i=1}\alpha_i^k, \quad k \in \Z_{\geq1}.
\end{align}
This formula implies that the zeros and poles of the generating L-function contain critical information about the exponential sums.

From Deligne's theorem on Riemann hypothesis\cite{Deligne1980}, the complex absolute values of reciprocal zeros and poles are bounded as follows
\begin{equation*}
|\alpha_i|=q^{u_i/2}, |\beta_j|=q^{v_j/2}, u_i\in \Z \cap [0,2n], v_j\in \Z \cap [0,2n].
\end{equation*}
For non-archimedean absolute values, Deligne\cite{Deligne1980} proved that $|\alpha_i|_\ell=|\beta_j|_\ell=1$ when $\ell$ is a prime and $\ell\neq p$. Depending on Deligne's integrality theorem\cite{Deligne1980}, we have the following estimates for $p$-adic absolute values
\begin{equation*}
|\alpha_i|_p=q^{-r_i}, |\beta_j|_p=q^{-s_j}, r_i\in \Q \cap [0,n], s_j\in \Q \cap [0,n].
\end{equation*}
The integer $u_i$ (resp. $v_j$) is called the \textit{weight} of $\alpha_i$ (resp. $\beta_j$) and the rational number $r_i$
 (resp. $s_j$) is called the \textit{slope} of $\alpha_i$ (resp. $\beta_j$). In the past few decades, it has been tremendous interest in determining the weights and slopes of the generating L-functions. Without any further condition on the Laurent polynomial $f$ or prime $p$, it's even hard to determine the number of reciprocal roots and poles. Adolphson and Sperber\cite{AS1989} proved that under a suitable smoothness condition of $f$ in $n$ variables, the associated L-function $\LF^*(f,T)^{(-1)^{n-1}}$ is a polynomial, i.e., $\LF^*(f,T)$ is a polynomial or the inverse of a polynomial. In this case, the Newton polygon can be used to determine the slopes of the reciprocal roots. Adolphson and Sperber\cite{AS1989} also proved that if $\LF^*(f,T)^{(-1)^{n-1}}$ is a polynomial, its Newton polygon has a lower bound called Hodge polygon. The basic definitions of the Newton polygon and the Hodge polygon will be discussed in the next subsection.

\subsection{Newton polygon and Hodge polygon}\label{subsec22}
Let 
\begin{equation*}
f(x_1, \ldots x_n)=\sum^J_{j=1} a_j x^{V_j}
\end{equation*}
 be a Laurent polynomial with $a_j \in \F ^*_{q}$ and $V_j=(v_{1j},\ldots, v_{nj})\in \Z^n$ $(1\leq j\leq J)$. The \emph{Newton polyhedron} of $f$, $\Delta(f)$, is defined to be the convex closure in $\R^n$ generated by the origin and the lattice points $V_j$ ($1\leq j \leq J$). For $\delta \subset \Delta(f)$, let the Laurent polynomial
\begin{equation*}
f^{\delta}=\sum_{V_j\in \delta}a_j x^{V_j}
\end{equation*}
be the restriction of $f$ to $\delta$. Then we define a certain smoothness condition of a Laurent polynomial.
 \begin{Def}
 	A Laurent polynomial $f$ is called non-degenerate if for each closed face $\delta$ of $\Delta(f)$ of arbitrary dimension which doesn't contain the origin, the $n$-th partial derivatives 
 	\begin{equation*}
 	\left\{ \frac{\partial f^{\delta}}{\partial x_1 }, \ldots, \frac{\partial f^{\delta}}{\partial x_n} \right\}
 	\end{equation*} 
 	have no common zeros with $x_1\ldots x_n \neq 0$ over the algebraic closure of $\F_q$.
 \end{Def}
 Generally, we are more interested in the non-degenerate Laurent polynomials whose generating L-functions are polynomials or the inverse of polynomials. 
\begin{Th}[Adolphson and Sperber\cite{AS1989}]\label{thm2}
	For any non-degenerate $f\in  \F_{q}[x_1^{\pm1},\ldots, x_n^{\pm1}]$, the associated L-function $\LF^*(f,T)^{(-1)^{n-1}}$ is of the following form, 
	\begin{equation*}
	\LF^*(f,T)^{(-1)^{n-1}}=\prod^{n! \Vol(\Delta(f))}_{i=1}(1-\alpha_i T),
	\end{equation*}
	where $|\alpha_i|=q^{\omega_i/2}$, $\omega_i\in \Z\cap [0,n]$ and $i=1,2,\ldots, n!\Vol(\Delta).$
\end{Th}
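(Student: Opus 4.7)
The plan is to prove Theorem \ref{thm2} via Dwork's $p$-adic trace formula, in the form adapted by Adolphson and Sperber to Newton polyhedra. First, using Dwork's splitting function $\theta(t)\in\Z_p[\zeta_p][[t]]$, which $p$-adically interpolates the additive character $\psi$, one replaces $\psi(\Tr_k(f))$ by a value of a convergent $p$-adic product. This rewrites $S_k^*(f)$ as $(q^k-1)^n$ times the trace of the $k$-th power of a completely continuous operator $\alpha_q$ on a suitable $p$-adic Banach space $B$ of overconvergent Laurent series. Expanding $(q^k-1)^n$ binomially and taking logarithmic derivatives then expresses $\LF^*(f,T)$ as an alternating product of characteristic polynomials of $\alpha_q$ on the Koszul complex associated with the $n$ commuting operators $D_i = x_i\,\partial/\partial x_i + \gamma\, x_i\,\partial f/\partial x_i$ acting on $B$.

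The second step is to match the topology of $B$ to the Newton polyhedron: take $B$ to consist of formal series $\sum c_u x^u$ with $u$ in the cone over $\Delta(f)$ and $|c_u|_p$ decaying at a rate dictated by the weight function $w(u)=\inf\{\lambda>0 : u\in\lambda\Delta(f)\}$. The non-degeneracy hypothesis enters here: after passing to the associated graded ring, the symbols of $D_1,\ldots,D_n$ restrict on each face cone to multiplication by the operators $x_i\,\partial f^{\delta}/\partial x_i$, and non-degeneracy forces these to form a regular sequence. A spectral sequence argument then propagates acyclicity from the graded level back to $B$, so only the top Koszul cohomology $H^n$ survives. Consequently $\LF^*(f,T)^{(-1)^{n-1}}=\det(1-T\alpha_q\mid H^n)$ is a polynomial, whose degree equals $\dim H^n$; a Kouchnirenko-style computation identifies this dimension with $n!\Vol(\Delta(f))$.

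The remaining ingredient, and the main obstacle, is the archimedean bound $|\alpha_i|=q^{\omega_i/2}$ with $\omega_i\in\Z\cap[0,n]$. Integrality of $\omega_i$ together with the weak bound $\omega_i\in[0,2n]$ follows from Deligne's theorem on the Riemann hypothesis, by realizing the $\alpha_i$ as Frobenius eigenvalues on the $\ell$-adic compactly supported cohomology of the torus $(\F_q^*)^n$ with coefficients in the Artin--Schreier sheaf attached to $f$. The sharper upper bound $\omega_i\leq n$ uses that, under non-degeneracy, only the middle-degree cohomology contributes to $\LF^*(f,T)^{(-1)^{n-1}}$, which pins the weights into the claimed interval. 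Combined with the $p$-adic polynomial shape established in the previous step, this yields the theorem.
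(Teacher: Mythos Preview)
The paper does not give its own proof of Theorem~\ref{thm2}; it is stated as a result of Adolphson and Sperber \cite{AS1989} and used as a black box. The Dwork trace formula, the Banach space $B$, and the Frobenius matrix $A_1(f)$ that you describe do appear in Subsection~\ref{subsubsec232}, but only to set up machinery for the later decomposition theorems, not to reprove Theorem~\ref{thm2}.

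Your outline is an accurate summary of how the result is actually established in the literature. The $p$-adic half---replacing $\psi$ by Dwork's $\theta$, passing to the Koszul complex for the operators $D_i$, using non-degeneracy to get a regular sequence on the graded pieces and hence acyclicity away from top degree, and computing $\dim H^n=n!\Vol(\Delta)$ \`a la Kouchnirenko---is exactly the Adolphson--Sperber argument. The archimedean half is likewise correct: the vanishing of $H^i_c(\mathbb{G}_m^n,\mathcal{L}_\psi(f))$ for $i\neq n$ under non-degeneracy (on the $\ell$-adic side this is Denef--Loeser \cite{Denef1991}, and there is also a $p$-adic proof in \cite{AS1989}) identifies the $\alpha_i$ with Frobenius eigenvalues on a single cohomology group $H^n_c$, and Deligne's weight bound for compactly supported cohomology of a pure weight-$0$ sheaf on an $n$-dimensional variety then gives $\omega_i\le n$, while integrality gives $\omega_i\ge 0$. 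So there is nothing to correct; you have simply supplied the proof the paper chose to cite.
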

Deligne's integrality theorem\cite{Deligne1980} implies that the $p$-adic absolute values of reciprocal roots are given by $|\alpha_i|_p=q^{-r_i}$ where $r_i\in \Q\cap[0,n]$. For simplicity, we normalize $p$-adic absolute value to be $|q|_p=q^{-1}$. So we can compute the $q$-adic Newton polygon of $\LF^*(f,T)^{(-1)^{n-1}}$ to determine the $q$-adic slopes of its reciprocal roots. The $q$-adic Newton polygon is defined as follows.
\begin{Def}[Newton polygon]\label{m1}
	Let $\LF(T)=\sum^n_{i=0}a_i T^i$ $\in 1+T\overline{\Q}_p[T]$, where $\overline{\Q}_p$ is the algebraic closure of $\Q_p$. The $q$-adic Newton polygon of $\LF(T)$ is defined to be the lower convex closure of the set of points $\{\left(k,\ord_q(a_k)\right) | k=0, 1,\ldots, n \}$ in $\R^2$.
\end{Def}
Here $\ord_q$ denotes the standard $q$-adic ordinal on $\overline{\Q}_p$ where the valuation is normalized by assuming $\ord_q(q)=1$. The following lemma\cite{koblitz2012p} describes the relationship between the shape of a $q$-adic Newton polygon and the $q$-adic valuation of all the related reciprocal roots.
\begin{Lemma}\label{le4}
	In the above notation, let $\LF(T)=(1-{\alpha_1}T)\ldots(1-{\alpha_n}T)$ be the factorization of $\LF(T)$ in terms of reciprocal roots $\alpha_i \in \overline{\Q}_p$. Let $\lambda_i=\ord_q\alpha_i$. If $\lambda$ is the slope of a $q$-adic Newton polygon with horizontal length $l$, then precisely $l$ of the $\lambda_i$ are equal to $\lambda$.
\end{Lemma}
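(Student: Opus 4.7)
The plan is to reduce the statement to Vieta's formulas combined with the non-archimedean triangle inequality, which is the standard method for such Newton polygon results. First, I would expand
\begin{equation*}
\LF(T)=\prod_{i=1}^{n}(1-\alpha_i T)=\sum_{k=0}^{n}(-1)^{k}e_k(\alpha_1,\ldots,\alpha_n)\,T^k,
\end{equation*}
so that $a_k=(-1)^{k}e_k(\alpha_1,\ldots,\alpha_n)$. After reindexing I may assume $\lambda_1\le\lambda_2\le\cdots\le\lambda_n$. I would then group the $\lambda_i$ into the distinct values $\mu_1<\mu_2<\cdots<\mu_s$ with multiplicities $m_1,\ldots,m_s$, and set $L_j=m_1+\cdots+m_j$, $L_0=0$; the polygon $\Pi$ whose successive edges have slope $\mu_j$ and horizontal length $m_j$ is the candidate for the Newton polygon.

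Second, I would establish the lower bound $\ord_q(a_k)\ge\lambda_1+\cdots+\lambda_k$ for $0\le k\le n$. Each monomial $\alpha_{i_1}\cdots\alpha_{i_k}$ appearing in $e_k$ has $q$-ordinal $\lambda_{i_1}+\cdots+\lambda_{i_k}\ge\lambda_1+\cdots+\lambda_k$, so the ultrametric inequality gives the bound. Geometrically this says every point $(k,\ord_q(a_k))$ lies on or above $\Pi$, so the Newton polygon of $\LF(T)$ lies on or above $\Pi$.

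Third, and this is the only place where care is required, I would show that equality holds at each breakpoint $k=L_j$. Among all size-$L_j$ subsets $S\subseteq\{1,\ldots,n\}$ the sum $\sum_{i\in S}\lambda_i$ attains its minimum $\sum_{i=1}^{j}m_i\mu_i$ \emph{uniquely} at $S_0=\{i:\lambda_i\le\mu_j\}$, because swapping any element of $S_0$ for an index with strictly larger slope strictly increases the sum. Hence the monomial $\prod_{i\in S_0}\alpha_i$ is the unique term in $e_{L_j}$ of minimal $q$-ordinal and cannot be cancelled by other contributions, so the non-archimedean triangle inequality forces $\ord_q(a_{L_j})=\sum_{i=1}^{j}m_i\mu_i$.

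Combining the lower bound with equality at each breakpoint shows that the Newton polygon of $\LF(T)$ coincides with $\Pi$, and reading off its slopes and horizontal lengths yields the lemma: the edge of slope $\lambda=\mu_j$ has horizontal length $m_j$, which is exactly the number of indices $i$ with $\lambda_i=\mu_j$. The main obstacle is the uniqueness-of-minimum step in the third paragraph; it becomes transparent once the slopes are sorted and grouped into the distinct values $\mu_1<\cdots<\mu_s$, but without such grouping one obtains only the coarser lower bound and loses the edge-slope identification.
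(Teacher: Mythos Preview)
Your argument is correct. The paper does not give its own proof of this lemma; it simply cites Koblitz's textbook \cite{koblitz2012p}, and the approach you take---Vieta's formulas together with the ultrametric inequality, with the key observation that at each breakpoint $k=L_j$ there is a \emph{unique} subset $S_0$ of indices realising the minimal valuation in $e_{L_j}$---is precisely the classical argument found there. So there is nothing to compare: you have supplied the standard proof that the paper chose to omit.
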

Assume $f$ is a non-degenerate Laurent polynomial in $n$ variables. So by Theorem \ref{thm2}, the L-function $\LF^*(f,T)^{(-1)^{n-1}}$ is a polynomial. Let NP($f$) denote the $q$-adic Newton polygon of $\LF^*(f,T)^{(-1)^{n-1}}$. It's always hard to directly compute NP($f$). Adolphson and Sperber proved that NP($f$) has a topological lower bound called Hodge polygon, which is easier to calculate\cite{AS1989}. So it's a general way to first compute its lower bound Hodge polygon and then determine when the Newton polygon coincides with its lower bound. 

Let $\Delta$ be an $n$-dimensional integral polytope containing the origin in $\R^n$. Define $C(\Delta)$ to be the cone generated by $\Delta$ in $\R^{n}$. For any point $u\in \R^{n}$, the weight function $w(u)$ is the smallest non-negative real number $c$ such that $u\in c\Delta$. Let $w(u)=\infty$ if such $c$ doesn't exist. Assume $\delta$ is a co-dimension 1 face of $\Delta$ not containing the origin. Let $D(\delta)$ be the least common multiple of the denominators of the coefficients in the implicit equation of $\delta$, normalized to have constant term 1.
We define the denominator of $\Delta$ to be the least common multiple of all such $D(\delta)$ given by:
\begin{equation*}
D=D(\Delta)= \mathrm{lcm}_{\delta}D(\delta)
\end{equation*}
where $\delta$ runs over all the co-dimension 1 faces of $\Delta$ that don't contain the origin. It's easy to check 
\begin{equation*}
w(\Z^n)\subseteq \frac{1}{D(\Delta)}\Z_{\geq0}\cup \{ + {\infty} \}.
\end{equation*} 
 For a non-negative integer $k$, let 
 \begin{equation*}
 W_{\Delta}(k)=\# \left\{ u \in \Z^n | w(u)= \frac{k}{D} \right\}
 \end{equation*} 
 be the number of lattice points in $\Z^n$ with weight $k/D$. 
\begin{Def}[Hodge number]\label{def5}
	Let $\Delta$ be an $n$-dimensional integral polytope containing the origin in $\R^n$. For a non-negative integer $k$, the $k$-th Hodge number of $\Delta$ is defined to be
\begin{align}\label{eq3}
	H_{\Delta}(k)=\sum^{n}_{i=0}(-1)^i \binom{n}{i}W_{\Delta}(k-iD).
\end{align}
\end{Def}
It's easy to check that 
\begin{equation*}
H_{\Delta}(k)=0, \quad \text{if}\quad k>nD.
\end{equation*}
Adolphson and Sperber\cite{AS1989} proved that $H_{\Delta}(k)$ coincides with the usual Hodge number in the toric hypersurface case that $D=1$. Based on the Hodge numbers, we define the Hodge polygon of a given polyhedron $\Delta\in \R^n$ as follows.
\begin{Def}[Hodge polygon]\label{def6}
	The Hodge polygon HP($\Delta$) of $\Delta$ is the lower convex polygon in $\R^2$ with vertices (0,0) and 
	\begin{equation*}
	Q_k=\left( \sum^k_{m=0}H_{\Delta}(m), \frac{1}{D}\sum^k_{m=0}m H_{\Delta}(m) \right), \quad  k=0,1,\ldots, nD,
	\end{equation*}
	where $H_{\Delta}(k)$ is the $k$-th Hodge number of $\Delta$, $k=0,1,\ldots, nD.$
	
	That is, HP($\Delta$) is a polygon starting from origin (0,0) with a slope $k/D$ side of horizontal length $H_{\Delta}(k)$ for $k=0,1,\ldots, nD$. The vertex $Q_k$ is called a break point if $H_{\Delta}(k+1)\neq 0$ where $k=1,2,\ldots,nD-1$. 
\end{Def}
Here the horizontal length $H_{\Delta}(k)$ represents the number of  lattice points of weight $k/D$ in a certain fundamental domain corresponding to a basis of the $p$-adic cohomology space used to compute the L-function. Adolphson and Sperber constructed the Hodge polygon and proved that it's a lower bound of the corresponding Newton polygon. 
\begin{Th}[Adolphson and Sperber\cite{AS1989}]
	For every prime p and non-degenerate Laurent polynomial $f$ with $\Delta(f)=\Delta \subset \R^n$, we have 
	\begin{equation*}
	\text{NP}(f) \geq \text{HP}(\Delta),
	\end{equation*}
	where NP($f$) is the $q$-adic Newton polygon of  $\LF^*(f,T)^{(-1)^{n-1}}.$
	Furthermore, the endpoints of NP($f$) and NP($\Delta$) coincide.
\end{Th}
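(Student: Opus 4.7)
The plan is to follow Dwork's $p$-adic cohomological method, which realizes $\LF^*(f,T)^{(-1)^{n-1}}$ as the characteristic polynomial of a Frobenius-type operator $\alpha$ on a finite-dimensional cohomology space $H^n$. Concretely, I would work inside a $p$-adic Banach space of Laurent series supported on the cone $C(\Delta)$ with coefficients decaying in a manner controlled by the weight function $w$, and form a twisted Dwork complex built from the partial derivatives of a lift of $f$. Under the non-degeneracy hypothesis, a Koszul-type argument shows that the cohomology is concentrated in top degree and that $\dim H^n = n!\Vol(\Delta)$, matching Theorem~\ref{thm2}.

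Next I would pick a basis of $H^n$ consisting of monomials $x^u$ indexed by a fundamental set of lattice points $u\in C(\Delta)\cap \Z^n$ modulo the $\Delta$-translation relations coming from the Koszul differentials, so that exactly $H_\Delta(k)$ basis vectors have weight $k/D$. The core of the proof is then the matrix-entry estimate
\begin{equation*}
\ord_q M_{u,v} \geq w(u),
\end{equation*}
where $M_{u,v}$ is the matrix of $\alpha$ in this basis. This follows from the classical lower bound $\ord_p \theta_m \geq m(p-1)/p$ on the coefficients of Dwork's splitting function $\theta(t)=\exp(\pi(t-t^p))$, composed with Teichm\"uller lifts of the coefficients of $f$ and tracked against the piecewise linear function $w$ on $C(\Delta)$. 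Arranging the columns in increasing order of $w(u)$, the standard Newton polygon lemma for characteristic polynomials then yields
\begin{equation*}
\mathrm{NP}(f) \geq \mathrm{HP}(\Delta),
\end{equation*}
since by construction the right-hand side is the polygon whose slope-$k/D$ side has horizontal length $H_\Delta(k)$.

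For the endpoint assertion, both polygons start at $(0,0)$ by the normalization of $\LF^*(f,T)$. The right endpoint has horizontal coordinate $n!\Vol(\Delta)$ for NP by Theorem~\ref{thm2} and by summing $H_\Delta(k)$ for HP. Its vertical coordinate is $\ord_q \det\alpha$ on the NP side; a direct determinant computation in the chosen basis matches this with $\tfrac{1}{D}\sum_k k H_\Delta(k)$, which is where the symmetry $H_\Delta(k)=H_\Delta(nD-k)$ enters. The main obstacle is the matrix-entry estimate $\ord_q M_{u,v}\geq w(u)$: one must uniformly control the interaction between the combinatorial weight function on lattice points in $C(\Delta)$, the Teichm\"uller lifts of the coefficients of $f$, and the $p$-adic expansion of $\theta$ applied to monomial substitutions. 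The Koszul-type argument establishing the correct cohomological dimension under the non-degeneracy hypothesis is the other delicate ingredient, as it is precisely what reduces the problem from an infinite-dimensional Banach operator to a finite matrix of size $n!\Vol(\Delta)$.
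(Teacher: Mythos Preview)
The paper does not give its own proof of this theorem: it is quoted as background from Adolphson--Sperber \cite{AS1989}, so there is nothing to compare against directly. That said, the paper does lay out in \S\ref{subsubsec232} the Dwork/Adolphson--Sperber machinery that underlies the result (the Frobenius matrix $A_1(f)$, the estimate $\ord_p a_{r,s}(f)\geq w(s)$, the block form, and the chain-level polygon $P(\Delta)$), and your sketch is recognizably the same strategy, phrased cohomologically rather than at the chain level of the Fredholm determinant.

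Two small points where your sketch drifts from the setup actually used. First, you take Dwork's original splitting function $\theta(t)=\exp(\pi(t-t^p))$ and quote the bound $\ord_p\theta_m\geq m(p-1)/p$; the paper (and Adolphson--Sperber) use the Artin--Hasse exponential $\theta(t)=E_p(\pi t)$, for which the coefficients $\lambda_m$ are $p$-adic integers and one gets the clean estimate $\ord_p(\lambda_m\pi^m)\geq m/(p-1)$, hence $\ord_p F_r(f)\geq w(r)/(p-1)$. This is the estimate that feeds directly into $\ord_p a_{r,s}(f)\geq w(s)$; your stated bound $m(p-1)/p$ is not the right constant and would not give the Hodge polygon on the nose. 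Second, in your matrix-entry estimate $\ord_q M_{u,v}\geq w(u)$ you then speak of ordering \emph{columns} by $w(u)$; in the paper's convention the lower bound is governed by the \emph{column} weight $w(s)$, so make sure your indexing is consistent. Apart from these details, the outline (Koszul-type concentration in top degree under non-degeneracy, weighted basis with $H_\Delta(k)$ elements of weight $k/D$, column-weight estimate on the Frobenius matrix, endpoint via the determinant) is the standard Adolphson--Sperber argument and matches the framework the paper summarizes without proof.
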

\begin{Def}
	A Laurent polynomial $f$ is called ordinary if NP($f$) = HP($\Delta$). 
\end{Def}

In order to study the ordinary property, we will apply Wan's decomposition theorems\cite{Dwan1993}. Apparently, the ordinary property of a Laurent polynomial depends on its Newton polyhedron $\Delta$. Wan's theorems decompose the polyhedron $\Delta$ into small pieces which are much easier to deal with \cite{Dwan1993}. We will give a brief introduction of Wan's facial decomposition theorem and boundary decomposition theorem in subsection \ref{subsec23}.

\subsection{Wan's decomposition theorems}\label{subsec23}
 \subsubsection{Facial decomposition theorem}\label{subsubsec231}
In this paper, we use facial decomposition theorem to cut the polyhedron into small simplices. For each simplex, we can apply some criteria to determine the non-degenerate and ordinary property. 
\begin{Th}[Facial decomposition theorem\cite{Dwan1993}]\label{thm9}
	Let $f$ be a non-degenerate Laurent polynomial over $\F_q$. Assume $\Delta=\Delta(f)$ is $n$-dimensional and $\delta_1,\ldots, \delta_h$ are all the co-dimension 1 faces of $\Delta$ which don't contain the origin. Let $f^{\delta_i}$ denote the restriction of $f$ to $\delta_i$. Then $f$ is ordinary if and only if $f^{\delta_i}$ is ordinary for $1\leq i\leq h$. 
\end{Th}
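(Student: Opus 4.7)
The plan is to reduce the global identity $\mathrm{NP}(f)=\mathrm{HP}(\Delta)$ to its analogues on each face by cutting the defining polytope into cones over the $\delta_i$. Following the Dwork-Adolphson-Sperber setup underlying Theorem \ref{thm2}, I would realize $\LF^*(f,T)^{(-1)^{n-1}}$ as the characteristic polynomial of a $p$-adic Frobenius $\alpha$ acting on a Banach space $B$ whose orthonormal basis is indexed by a fundamental domain of lattice points in the cone $C(\Delta)$, with Banach norm controlled by the weight function $w$. By construction, the Hodge polygon $\mathrm{HP}(\Delta)$ is the lower polygon whose $k$-th slope side has horizontal length equal to the number of such basis vectors of weight $k/D$.

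Next, I would decompose $C(\Delta)\setminus\{0\}$ as the disjoint union of the relatively open cones $\operatorname{cone}^{\circ}(\delta_i)$, $i=1,\ldots,h$. On each such open cone the weight function $w$ agrees with the linear form $\ell_i$ defining the supporting hyperplane of $\delta_i$ (normalized so $\ell_i|_{\delta_i}=1$), while $w>\ell_i$ elsewhere in $C(\Delta)\setminus\{0\}$. Grouping lattice points first by the cone they lie in and then by weight gives the Minkowski decomposition $\mathrm{HP}(\Delta)=\sum_{i=1}^h \mathrm{HP}(C(\delta_i))$ (slopes stacked in increasing order), where $\mathrm{HP}(C(\delta_i))$ is the Hodge polygon built from the simplicial cone over $\delta_i$. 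The target of this step is the parallel Newton-polygon identity $\mathrm{NP}(f)=\sum_{i=1}^h \mathrm{NP}(f^{\delta_i})$, where $\mathrm{NP}(f^{\delta_i})$ records the Frobenius on the subspace $B_i\subset B$ spanned by basis vectors indexed by $\operatorname{cone}^{\circ}(\delta_i)$.

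The main obstacle is establishing this Newton-polygon decomposition. To handle it, I would refine the $w$-filtration of $B$ to a two-step filtration: first group basis vectors by which cone $\operatorname{cone}^{\circ}(\delta_i)$ they lie in, then by weight inside that cone. Using $w=\ell_i$ on $\operatorname{cone}^{\circ}(\delta_i)$ together with the explicit exponential form of the Dwork splitting function, one shows that on the associated graded the Frobenius preserves the cone decomposition, i.e., it is block-diagonal with blocks acting on the $B_i$. This block structure propagates to the characteristic polynomial of the leading-term operator, yielding the sought additivity of Newton polygons.

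Finally, combining the two additivity statements with the Adolphson-Sperber lower bounds $\mathrm{NP}(f)\geq\mathrm{HP}(\Delta)$ and $\mathrm{NP}(f^{\delta_i})\geq\mathrm{HP}(C(\delta_i))$ for every $i$ (applied both globally and face-by-face), the global equality $\mathrm{NP}(f)=\mathrm{HP}(\Delta)$ is forced to be equivalent to the collection of local equalities $\mathrm{NP}(f^{\delta_i})=\mathrm{HP}(C(\delta_i))$ for $1\leq i\leq h$. Unwinding definitions, the latter is exactly the ordinariness of each $f^{\delta_i}$, completing the equivalence.
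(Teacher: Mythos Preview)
The paper does not prove Theorem~\ref{thm9}; it is quoted verbatim from Wan's paper \cite{Dwan1993} and used as a black box in Section~\ref{subsec31}. So there is no in-paper proof to compare your proposal against.

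That said, your sketch contains two substantive errors that would prevent it from going through. First, the relatively open cones $\operatorname{cone}^\circ(\delta_i)$ over the codimension-$1$ faces $\delta_i$ do \emph{not} partition $C(\Delta)\setminus\{0\}$: every ray through a point of $\delta_i\cap\delta_j$ lies in neither open cone. The disjoint partition by open cones over \emph{all} faces is the boundary decomposition (Definition~\ref{defbd}), which is a different tool. Second, and more seriously, the identity $\mathrm{HP}(\Delta)=\sum_{i=1}^h \mathrm{HP}(C(\delta_i))$ is false. In the very example of this paper each $\Delta_i$ is a unimodular $5$-simplex, so $\mathrm{HP}(\Delta_i)$ is a single slope-$0$ segment of length $1$; summing nine of these gives a polygon ending at $(9,0)$, whereas Theorem~\ref{thm21} shows $\mathrm{HP}(\Delta)$ ends at $(9,18)$. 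For the same reason the parallel claim $\mathrm{NP}(f)=\sum_i \mathrm{NP}(f^{\delta_i})$ at the $L$-function level cannot hold, and the final ``forcing'' step collapses.

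Wan's actual argument does not decompose $\mathrm{NP}(f)$ or $\mathrm{HP}(\Delta)$ directly. It passes to the chain level via Proposition~\ref{prop c}: $f$ is ordinary iff the Newton polygon of the full Fredholm determinant $\det(I-TA_a(f))$ equals $P(\Delta)$. It is $P(\Delta)$ (built from the $W_\Delta(k)$) and the Fredholm determinant that decompose compatibly over the closed cones $C(\Delta_i)$, because the weight function $w$ restricted to $C(\Delta_i)$ agrees with the weight for $\Delta_i$, and the Frobenius matrix $A_1(f)$ restricted to lattice points of $C(\Delta_i)$ coincides with $A_1(f^{\delta_i})$. The equivalence of ordinariness then follows by comparing chain-level Newton polygons to the $P(\Delta_i)$ face by face and invoking Proposition~\ref{prop c} again for each $f^{\delta_i}$.
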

In order to describe the boundary decomposition, we first express the L-function in terms of the Fredholm determinant of an infinite Frobenius matrix.\\

\subsubsection{Dwork's trace formula}\label{subsubsec232}
Let $p$ be a prime and $q=p^a$ for some positive integer $a$. Let $\Q_p$ denote the field of $p$-adic numbers and $\Omega$ be the completion of $\overline{\Q}_p$. Pick a fixed primitive $p$-th root of unity in $\Omega$ denoted by $\zeta_p$. In $\Q_p(\zeta_p)$, choose a fixed element $\pi$ satisfying
\begin{equation*}
\sum^{\infty}_{m=0}\frac{\pi^{p^m}}{p^m}=0 \quad \text{and} \quad \ord_p \pi =\frac{1}{p-1}.
\end{equation*} 
By Krasner's lemma, it's easy to check $Q_p(\pi)=Q_p(\zeta_p)$. Let $K$ be the unramified extension of $\Q_p$ of degree $a$. Let $\Omega_a$ be the compositum of $Q_p(\zeta_p)$ and $K$. 
\begin{center}
	\begin{tikzpicture}[node distance = 1.5cm, auto]
      \node (Q) {$\Q_p$};
      \node (K) [above of=Q, left of=Q] {$K$};
      \node (Z) [above of=Q, right of=Q] {$Q_p(\pi)$};
      \node (O) [above of=Q, node distance = 3cm] {$\Omega_a$};
      \draw[-] (Q) to node {$a$} (K);
      \draw[-] (Q) to node [swap] {$p-1$} (Z);
      \draw[-] (K) to node {$$} (O);
      \draw[-] (Z) to node [swap] {$$} (O);
      \end{tikzpicture}
  \end{center}
Define the Frobenius automorphism $\tau \in \text{Gal}(\Omega_a/\Q_p(\pi))$ by lifting the Frobenius automorphism $x\mapsto x^p$ of Gal($\F_q/\F_p$) to a generator $\tau$ of Gal($K/\Q_p$) and extending it to $\Omega_a$ with $\tau(\pi)=\pi$. For the primitive $(q-1)$-th root of unity $\zeta_{q-1}$ in $\Omega_a$, we have $\tau(\zeta_{q-1})=\zeta_{q-1}^p$.

Let $E_p(t)$ be the Artin-Hasse exponential series,
\begin{equation*}
E_p(t)=\exp\left(\sum^{\infty}_{m=0}\frac{t^{p^m}}{p^m}\right)=\sum_{m=0}^{\infty}\lambda_m t^m \in  \Z_p[[x]].
\end{equation*}
In Dwork's terminology, a splitting function $\theta(t)$ is defined to be
\begin{equation*}
\theta(t)=E_p(\pi t)=\sum_{m=0}^{\infty}\lambda_m\pi^mt^m.
\end{equation*}
When $t=1$, $\theta(1)$ can be identified with $\zeta_p$ in $\Omega$.

Consider a Laurent polynomial $f \in $ $\F_q[x_1^{\pm1}, \ldots, x_n^{\pm1}]$ given by
\begin{equation*}
f=\sum_{j=1}^J \bar{a}_j x^{V_j},
\end{equation*}
where $V_j \in {\Z}^n$ and $\bar{a}_j \in \F_q^{*}$. Let $a_j$ be the Teichm\"{u}ller lifting of $\bar{a}_j $ in $\Omega$ satisfying $a_j^q=a_j$. Let
\begin{equation*}
F(f,x)=\prod_{j=1}^J\theta(a_j x^{V_j})=\sum_{r \in {\Z}^n} F_r(f)x^r  \in \Omega_a[[x]].
\end{equation*}
The coefficients of $F(f,x)$ are given by 
\begin{equation*}
F_r(f)=\sum_u (\prod^{J}_{j=1} \lambda_{u_j} a_j^{u_j}) \pi^{u_1+\dots+u_{J}}, \quad r \in {\Z}^n,
\end{equation*}
 where the sum is over all the solutions of the following linear system
\begin{equation*}
\sum^{J}_{j=1}u_jV_j=r \quad \text{with}\quad u_j \in \Z_{\geq 0},
\end{equation*} 
and $\lambda_m$ is $m$-th coefficient of the Artin-Hasse exponential series $E_p(t)$.

Assume $\Delta=\Delta(f)$. Let $L(\Delta)=\Z^{n}\cap C(\Delta)$ be the set of lattice points in the closed cone generated by origin and $\Delta$. For a given point $r\in \R^n$, the weight function is given by
\begin{equation*}
w(r): =\inf_{\vec{u}}\left\{ \sum_{j=1}^J u_j |\sum_{j=1}^J u_jV_j=r,\quad u_j\in \R_{\geq 0}\right\}.
\end{equation*}
In Dwork's terminology, the infinite semilinear Frobenius matrix $A_1(f)$ is a matrix whose rows and columns are indexed by the lattice points in $L(\Delta)$ with respect to the weights
\begin{equation*}
	A_1(f)=(a_{r,s}(f))=(F_{ps-r}(f)\pi^{w(r)-w(s)}),
\end{equation*}
where $r, s\in L(\Delta)$. 
Based on the fact that $\ord_p F_r(f)\geq \frac{w(r)}{p-1}$, we have the following estimate
\begin{equation*}
\ord_p( a_{r,s}(f)) \geq \frac{w(ps-r)+w(r)-w(s)}{p-1}\geq w(s).
\end{equation*} 
Let $\xi$ be an element in $\Omega$ satisfying $\xi^D=\pi^{p-1}$. Then $A_1(f)$ can be written in a block form,
\begin{equation*}
A_1(f)
=\begin{pmatrix}
A_{00} &  \xi A_{01}  & \cdots\quad & {\xi}^iA_{0i}&\cdots\\
A_{10} &  \xi A_{11}  & \cdots\quad & {\xi}^iA_{1i}&\cdots\\
 \vdots & \vdots & \ddots  & \vdots  \\
 A_{i0} &  \xi A_{i1}  & \cdots\quad & {\xi}^iA_{ii}&\cdots\\
 \vdots & \vdots & \ddots  & \vdots
\end{pmatrix},
\end{equation*}
where the block $A_{ii}$ is a $p$-adic integral $W_{\Delta}(i) \times W_{\Delta}(i)$ matrix and $W_{\Delta}(i)=\#\{u \in \Z^n | w(u)= \frac{i}{D}\}$.

The infinite linear Frobenius matrix $A_a(f)$ is defined to be
\begin{equation*}
A_a(f)=A_1(f)A_1^{\tau}(f)\cdots A_1^{\tau^{a-1}}(f).
\end{equation*}
The $q$-adic Newton polygon of $\det(I-TA_a(f))$ has a natural lower bound which can be identified with the chain level version of the Hodge polygon.
\begin{Def}Let $P(\Delta)$ be the polygon in $\R^2$ with vertices $(0,0)$ and 
\begin{equation*}
P_k=\left( \sum^k_{m=0}W_{\Delta}(m), \frac{1}{D}\sum^k_{m=0}m W_{\Delta}(m) \right), \quad  k=0,1,2, \ldots
\end{equation*}
\end{Def}
\begin{Prop}[\cite{Dwan2004}]The $q$-adic Newton polygon of $\det(I-TA_a(f))$ lies above $P(\Delta).$
\end{Prop}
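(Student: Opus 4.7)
The plan is to deduce the Newton polygon bound from explicit column-weight estimates on the infinite Frobenius matrix $A_a(f)$, combined with the Fredholm determinant expansion. First I would establish the key column bound for the one-step Frobenius: using the formula $a_{r,s}(f)=F_{ps-r}(f)\pi^{w(r)-w(s)}$ together with the Artin--Hasse estimate $\ord_p F_u(f)\geq w(u)/(p-1)$ and the subadditivity identity $w(ps)=p w(s)\leq w(ps-r)+w(r)$ (which follows from positive homogeneity of $w$ on $C(\Delta)$), one obtains $\ord_p a_{r,s}(f)\geq w(s)$. This is precisely the estimate recorded after the definition of $A_1(f)$ in the text, and it is the source of the block factorisation $A_1(f)=(\xi^j A_{ij})$ with $A_{ii}$ $p$-adically integral, since $\ord_p(\xi)=1/D$.

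Next I would propagate the column bound through the Galois-twisted iterate $A_a(f)=A_1(f)A_1^\tau(f)\cdots A_1^{\tau^{a-1}}(f)$. Because $\tau$ fixes $\pi$ and preserves $p$-adic valuations, each factor $A_1^{\tau^i}(f)$ obeys the same column estimate. Expanding $(A_a(f))_{r,s}$ as a sum over paths $r=r_0,r_1,\ldots,r_a=s$ in $L(\Delta)$ with $pr_{i+1}-r_i\in C(\Delta)$ and applying the column bound to each factor gives a column estimate for $A_a(f)$. A careful accounting, which uses that $C(\Delta)$ is a pointed cone (so non-trivial forward paths cannot collapse to the origin) together with the bound $w(r_{i+1})\geq w(r_i)/p$ forced by $pr_{i+1}-r_i\in C(\Delta)$, yields the $q$-adically correct column estimate $\ord_q(A_a(f))_{r,s}\geq w(s)$.

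Then I would expand the Fredholm determinant as $\det(I-TA_a(f))=1+\sum_{k\geq 1}(-T)^k c_k$, where $c_k=\sum_{|S|=k}\det([A_a(f)]_S)$ is the sum over principal $k\times k$ minors indexed by subsets $S\subset L(\Delta)$. Each such minor expands as $\sum_{\sigma}\mathrm{sgn}(\sigma)\prod_{s\in S}(A_a(f))_{s,\sigma(s)}$ where $\sigma$ ranges over permutations of $S$. Applying the column bound to each factor and using that $\sigma$ permutes $S$ gives $\ord_q\det([A_a(f)]_S)\geq \sum_{s\in S}w(\sigma(s))=\sum_{s\in S}w(s)$. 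Minimising over subsets $S$ of size $k$, the infimum is attained by choosing the $k$ lattice points of smallest weight; enumerating lattice points in weight order using $W_\Delta(m)=\#\{u\in\Z^n:w(u)=m/D\}$, this minimum equals precisely the $y$-coordinate of $P(\Delta)$ at horizontal position $k$. Taking the lower convex hull then shows that the $q$-adic Newton polygon of $\det(I-TA_a(f))$ lies on or above $P(\Delta)$.

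The main technical obstacle lies in Step 2: passing from the one-step column bound $\ord_p a_{r,s}(f)\geq w(s)$ to a column bound on the iterate $A_a(f)$ that is correctly normalised for the $q$-adic (rather than $p$-adic) Newton polygon. The naive path expansion contributes a contribution of order $w(s)$ for the terminal column together with additional contributions from intermediate lattice points along the path; showing that these contributions combine to give the full weight $w(s)$ in the $q$-adic normalisation requires exploiting both the semilinear (Frobenius twist) structure and the pointedness of $C(\Delta)$, and this is the heart of Wan's argument in \cite{Dwan2004}.
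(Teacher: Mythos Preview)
The paper does not prove this proposition; it is quoted from \cite{Dwan2004}. So there is no ``paper's own proof'' to compare against, and your proposal must be judged on its own merits.

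Your Steps 1, 3, 4 are fine and standard. The gap is precisely where you flagged it, Step 2, but it is more serious than you indicate: the entry-wise column estimate $\ord_q (A_a(f))_{r,s}\ge w(s)$ that you are aiming for is \emph{false} in general, so no ``careful accounting'' along paths will produce it. Take $n=1$, $\Delta=[0,1]$, $f=a_1x$, and $p=a=2$. Then $w(s)=s$, and for $r=0$, $s=1$ the only path with minimal valuation is $r_0=0,\ r_1=0,\ r_2=1$, giving the term $a_{0,0}\,a_{0,1}^{\tau}$. One computes $a_{0,0}=1$ and $a_{0,1}=\lambda_2 a_1^{2}\pi$ with $\lambda_2\in\Z_p^{\times}$, so $\ord_p(A_2)_{0,1}=\ord_p\pi=1<2=a\,w(1)$. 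Thus $\ord_q(A_2)_{0,1}=\tfrac12<1=w(1)$. (Your auxiliary claim ``$pr_{i+1}-r_i\in C(\Delta)\Rightarrow w(r_{i+1})\ge w(r_i)/p$'' also fails in higher dimensions once some facet functional $\ell_j$ takes negative values on $C(\Delta)$; and $C(\Delta)$ need not be pointed if the origin is not a vertex.)

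What actually makes the $q$-adic bound work is not an entry-wise estimate on $A_a$ but the semilinear structure packaged as a \emph{determinant} identity. One clean way: form the $a\times a$ block-cyclic matrix
\[
M=\begin{pmatrix}
0 & A_1 & & \\
 & 0 & \ddots & \\
 & & \ddots & A_1^{\tau^{a-2}}\\
A_1^{\tau^{a-1}} & & & 0
\end{pmatrix},
\]
so that $\det(I-TM)=\det(I-T^{a}A_a(f))$. Now $M$ (unlike $A_a$) \emph{does} satisfy the column bound $\ord_p(M)_{(i,r),(j,s)}\ge w(s)$, inherited directly from $A_1$. Applying your Steps 3--4 to $M$ and reading off the coefficient of $T^{ak}$ gives $\ord_p c_k(A_a)\ge a\cdot\min_{|S|=k}\sum_{s\in S}w(s)$, i.e.\ $\ord_q c_k(A_a)\ge\min_{|S|=k}\sum_{s\in S}w(s)$, which is exactly the bound $P(\Delta)$. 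Equivalently one can use the norm relation for the $\tau$-semilinear operator underlying $A_1$. Either route replaces your Step 2; the rest of your outline then goes through.
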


By Dwork's trace formula\cite{Dwork1960}, we can identify the associated L-function with a product of some powers of the Fredholm determinant\cite{Dwan2004},
\begin{equation}\label{eq24}
\LF^{*}(f,T)^{(-1)^{n-1}}=\prod_{i=0}^{n}\det(I-Tq^{i}A_a(f))^{(-1)^i\binom{n}{i}}.
\end{equation}
Equivalently, we have,
\begin{equation}\label{eq25}
\det(I-TA_a(f))=\prod_{i=0}^{\infty} \left( \LF^{*}(f,q^iT)^{(-1)^{n-1}} \right)^{\binom{n+i-1}{i}}.
\end{equation}
\begin{Prop}[\cite{Dwan2004}]\label{prop c}
Notations as above. Assume $f$ is non-degenerate with $\Delta=\Delta(f)$. Then $\mathrm{NP}(f)=\mathrm{HP}(\Delta)$ if and only if the $q$-adic Newton polygon of $\det(I-TA_a(f))$ coincides with its lower bound $P(\Delta).$
\end{Prop}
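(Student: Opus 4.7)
My plan is to deduce the equivalence directly from the product identity \eqref{eq25} by translating it into a statement about $q$-adic slope multiplicities, and to match this translation with the combinatorial identity that defines the Hodge numbers in terms of the weight counts $W_\Delta(k)$. The upshot is that passing from $\LF^*(f,T)^{(-1)^{n-1}}$ to $\det(I-TA_a(f))$ and passing from $\mathrm{HP}(\Delta)$ to $P(\Delta)$ are the \emph{same} combinatorial operation on polygons, so equality of the first pair is equivalent to equality of the second.

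First I would unpack \eqref{eq25}. Since Theorem~\ref{thm2} guarantees that $\LF^*(f,T)^{(-1)^{n-1}}$ is a polynomial, let $\{\alpha_j\}$ denote the multiset of its reciprocal roots. Expanding the right-hand side of \eqref{eq25} shows that the reciprocal roots of $\det(I-TA_a(f))$ are exactly the numbers $q^i\alpha_j$ for $i\ge 0$ and each $j$, and that for fixed $i$ the family $\{q^i\alpha_j\}_j$ appears with multiplicity $\binom{n+i-1}{i}$. On the level of $q$-adic slopes, a slope-$\lambda$ reciprocal root of $\LF^*(f,T)^{(-1)^{n-1}}$ thus produces, for every $i\ge 0$, a slope-$(\lambda+i)$ reciprocal root of $\det(I-TA_a(f))$ with multiplicity $\binom{n+i-1}{i}$.

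Next I would set up the parallel combinatorial identity. The defining relation \eqref{eq3} is equivalent to the generating-function identity $\sum_k H_\Delta(k)T^k=(1-T^D)^n\sum_k W_\Delta(k)T^k$, which inverts to
\begin{equation*}
W_\Delta(k)=\sum_{i\ge 0}\binom{n+i-1}{i}H_\Delta(k-iD).
\end{equation*}
Since the slope-$k/D$ segment of $\mathrm{HP}(\Delta)$ has horizontal length $H_\Delta(k)$ and the slope-$k/D$ segment of $P(\Delta)$ has horizontal length $W_\Delta(k)$, this identity says precisely that $P(\Delta)$ is obtained from $\mathrm{HP}(\Delta)$ by the same shift-and-weight recipe that moves reciprocal roots of $\LF^*(f,T)^{(-1)^{n-1}}$ to those of $\det(I-TA_a(f))$.

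With the two parallel descriptions in hand, the equivalence is bookkeeping. If $\mathrm{NP}(f)=\mathrm{HP}(\Delta)$, then the number of slope-$\lambda$ reciprocal roots of $\LF^*(f,T)^{(-1)^{n-1}}$ equals $H_\Delta(D\lambda)$ for every $\lambda$, and summing the contributions from the previous paragraph yields $\sum_{i\ge 0}\binom{n+i-1}{i}H_\Delta(D\lambda-iD)=W_\Delta(D\lambda)$ reciprocal roots of slope $\lambda$ in $\det(I-TA_a(f))$, exactly matching the slope-$\lambda$ length of $P(\Delta)$. Conversely, if the Newton polygon of $\det(I-TA_a(f))$ equals $P(\Delta)$, I would prove by induction on $D\lambda$ that the slope-$\lambda$ multiplicity of $\LF^*(f,T)^{(-1)^{n-1}}$ equals $H_\Delta(D\lambda)$, extracting the leading $i=0$ term from the known sum at each step, which gives $\mathrm{NP}(f)=\mathrm{HP}(\Delta)$. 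The one place that needs a moment's care is the infinite product in \eqref{eq25}: at any fixed slope $\lambda$ only indices $i\le \lambda$ contribute, because the $i$-th factor produces only slopes $\ge i$, so every sum above is finite and no convergence issue arises.
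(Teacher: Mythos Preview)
The paper does not supply a proof of this proposition at all; it is quoted as a result from \cite{Dwan2004} and used as a black box. So there is no ``paper's own proof'' to compare against.

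Your argument is correct and is essentially the standard way one derives this equivalence. The key observation is exactly the one you isolate: formula~\eqref{eq25} expresses the Fredholm determinant as a weighted product of integer shifts of $\LF^*(f,T)^{(-1)^{n-1}}$, and the identity $W_\Delta(k)=\sum_{i\ge 0}\binom{n+i-1}{i}H_\Delta(k-iD)$ (the inverse of \eqref{eq3}) shows that $P(\Delta)$ is obtained from $\mathrm{HP}(\Delta)$ by the \emph{same} shift-and-weight procedure. Since this operation on slope multiplicities is invertible (lower-triangular convolution), equality on one side forces equality on the other. Your remark that at each slope $\lambda$ only finitely many indices $i\le\lambda$ contribute is exactly what makes the induction for the converse direction go through, and it also handles the possibility of slopes outside $\tfrac{1}{D}\Z$: at the base step such a slope would force a nonzero multiplicity in the determinant at a slope not appearing in $P(\Delta)$, contradicting the assumed equality.
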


\subsubsection{Boundary decomposition}\label{subsubsec233}
Let $f \in $ $\F_q[x_1^{\pm1}, \ldots, x_n^{\pm1}]$ with $\Delta=\Delta(f)$, where $\Delta$ is an $n$-dimensional integral convex polyhedron in $\R^n$ containing the origin. Let $C(\Delta)$ be the cone generated by $\Delta$ in $\R^n.$ 
\begin{Def}\label{defbd}
	The boundary decomposition 
	\begin{equation*}
	B(\Delta)=\{ \text{ the interior of a closed face in }C(\Delta) \text{ containing the origin} \}
	\end{equation*}
	is the unique interior decomposition of $C(\Delta)$ into a disjoint union of relatively open cones. 
\end{Def}
If the origin is a vertex of $\Delta$, then it is the unique 0-dimensional open cone in $B(\Delta)$. Recall that $A_1(f)=(a_{r,s}(f))$ is the infinite semilinear Frobenius matrix whose rows and columns are indexed by the lattice points in $L(\Delta)$. For $\Sigma \in B(\Delta)$, we define $A_1(\Sigma,f)$ to be the submatrix of $A_1(f)$ with $r,s \in \Sigma$. Let $f^{\overline{\Sigma}}$ be the restriction of $f$ to the closure of $\Sigma$. Then $A_1(\Sigma,f^{\overline{\Sigma}})$ denotes the submatrix of $A_1(f^{\overline{\Sigma}})$ with $r,s \in \Sigma$.

Let $B(\Delta)=\{\Sigma_0, \ldots, \Sigma_h\}$ such that $\text{dim}(\Sigma_i)\leq \text{dim}(\Sigma_{i+1})$, $i=0,\ldots,h-1.$ Define $B_{ij}=(a_{r,s}(f))$ with $ r\in \Sigma_i$ and $ s\in \Sigma_j$ $(0\leq i,j \leq h)$. After permutation, the infinite semilinear Frobenius matrix can be written as
\begin{equation}
A_1(f)=
\begin{pmatrix}
B_{00} &  B_{01}  & \cdots\quad &B_{0h}\\
B_{10} &  B_{11}  & \cdots\quad & B_{1h}\\
 \vdots & \vdots & \ddots  & \vdots  \\
B_{h0} & B_{h1}  & \cdots\quad & B_{hh}
\end{pmatrix},
\end{equation} 
where $B_{ij}=0$ for $i>j$. Then $\det(I-TA_1(f))=\prod_{i=0}^h\det(I-TB_{ii})$ and we have the boundary decomposition theorem.
\begin{Th}[Boundary decomposition\cite{Dwan1993}] 
	Let $f\in \F_q[x_1^{\pm1}, \ldots, x_n^{\pm1}]$ with $\Delta=\Delta(f)$. Then we have the following factorization
	\begin{equation*}
	\det(I-TA_1(f))=\prod_{\Sigma \in B(\Delta)}\det \left(I-TA_1(\Sigma,f^{\overline{\Sigma}})\right).
	\end{equation*}
\end{Th}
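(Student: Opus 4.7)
The plan is to exploit the block upper-triangular structure of $A_1(f)$ that is already hinted at in the discussion preceding the theorem statement, and then identify each diagonal block with the matrix $A_1(\Sigma,f^{\overline{\Sigma}})$. First I would order the open cones in $B(\Delta)=\{\Sigma_0,\ldots,\Sigma_h\}$ by non-decreasing dimension, and reindex the rows and columns of $A_1(f)$ accordingly so that the blocks $B_{ij}=(a_{r,s}(f))_{r\in\Sigma_i,\,s\in\Sigma_j}$ are laid out in this order. The crucial combinatorial claim is that $B_{ij}=0$ unless $\Sigma_i\subseteq\overline{\Sigma_j}$; in particular, $B_{ij}=0$ whenever $\dim\Sigma_i>\dim\Sigma_j$, which yields the desired upper triangular form.

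To prove the claim, recall that $a_{r,s}(f)=F_{ps-r}(f)\pi^{w(r)-w(s)}$, and that $F_{ps-r}(f)\neq 0$ forces $ps-r\in L(\Delta)\subseteq C(\Delta)$. Since $s\in\Sigma_j$ lies in the relative interior of the closed face $\overline{\Sigma_j}$ of $C(\Delta)$, the point $ps$ also lies in that relative interior. Writing $ps=r+(ps-r)$ as a sum of two elements of $C(\Delta)$, the standard extremality property of faces of convex cones forces both summands to lie in $\overline{\Sigma_j}$. In particular $r\in\overline{\Sigma_j}$, so the entire open cone $\Sigma_i$ containing $r$ is contained in $\overline{\Sigma_j}$, proving the claim.

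For the diagonal blocks, I would show $B_{ii}=A_1(\Sigma_i,f^{\overline{\Sigma_i}})$ by applying the same extremality argument term-by-term to the expansion $F_{ps-r}(f)=\sum_{u}\bigl(\prod_{j}\lambda_{u_j}a_j^{u_j}\bigr)\pi^{u_1+\cdots+u_J}$ over solutions of $\sum_j u_jV_j=ps-r$. For $r,s\in\Sigma_i$, $ps$ lies in the relative interior of $\overline{\Sigma_i}$, so writing $ps=r+\sum_j u_jV_j$ with every $V_j\in C(\Delta)$ forces each $V_j$ with $u_j>0$ to lie in $\overline{\Sigma_i}$. Hence only monomials of $f^{\overline{\Sigma_i}}$ contribute, so $F_{ps-r}(f)=F_{ps-r}(f^{\overline{\Sigma_i}})$. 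Since the weight function $w$ restricted to lattice points in $\overline{\Sigma_i}$ agrees with the weight function for the smaller Newton polyhedron (because $\overline{\Sigma_i}$ is itself a face of $C(\Delta)$, and the optimal non-negative combinations must stay inside $\overline{\Sigma_i}$ by the same argument), the prefactor $\pi^{w(r)-w(s)}$ also matches. Together these give $B_{ii}=A_1(\Sigma_i,f^{\overline{\Sigma_i}})$, and combining with the block upper triangular structure yields
\begin{equation*}
\det(I-TA_1(f))=\prod_{i=0}^{h}\det(I-TB_{ii})=\prod_{\Sigma\in B(\Delta)}\det\!\left(I-TA_1(\Sigma,f^{\overline{\Sigma}})\right).
\end{equation*}

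The main obstacle is not the combinatorics, which is driven entirely by the extremality of faces of a convex cone, but rather the analytic justification that the formal block-triangular identity for finite matrices extends to the infinite Frobenius matrix $A_1(f)$. This requires leveraging the $p$-adic estimate $\operatorname{ord}_p(a_{r,s}(f))\geq w(s)$ recorded earlier in the excerpt to show that $A_1(f)$ and each diagonal block $B_{ii}$ are completely continuous (nuclear) operators on the appropriate $p$-adic Banach space of functions supported on $L(\Delta)$. Once this nuclearity is in place, the Fredholm determinants are well-defined entire functions and the upper triangular factorization is a standard consequence, so all the essential work reduces to the two face-extremality claims above.
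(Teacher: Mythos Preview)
Your proposal is correct and follows exactly the approach the paper sketches just before stating the theorem: order the open cones $\Sigma_0,\ldots,\Sigma_h$ by non-decreasing dimension, observe that the resulting block matrix $(B_{ij})$ is upper triangular, and conclude the determinant factorization. The paper itself does not give a detailed proof (it cites Wan \cite{Dwan1993} and simply asserts $B_{ij}=0$ for $i>j$ and $\det(I-TA_1(f))=\prod_i\det(I-TB_{ii})$), so you have filled in precisely the two ingredients the paper leaves implicit: the face-extremality argument showing $B_{ij}=0$ unless $\Sigma_i\subseteq\overline{\Sigma_j}$, and the identification $B_{ii}=A_1(\Sigma_i,f^{\overline{\Sigma_i}})$ via the same extremality applied to the monomial support of $F_{ps-r}$.
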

\begin{Cor}\label{coro12}
	Let $f \in \F_q[x_1^{\pm1}, \ldots, x_n^{\pm1}]$ be a non-degenerate Laurent polynomial with an $n$-dimensional Newton polyhedron $\Delta$. If the origin is a vertex of $\Delta$, then the associated L-function
	\begin{equation*}
	\LF^*(f,T)^{(-1)^{n-1}}=(1-\psi(\Tr(c))T)\prod^{n! \Vol(\Delta(f))-1}_{i=1}(1-\alpha_i T),
	\end{equation*}
	where $c$ is the constant term of $f$, $\Tr: \F_{q} \rightarrow \F_{p}$ is the trace map and $|\alpha_i| \leq q^{n/2}$.
\end{Cor}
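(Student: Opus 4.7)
The plan is to combine Adolphson--Sperber's theorem (Theorem~\ref{thm2}) with Wan's boundary decomposition theorem. Theorem~\ref{thm2} already yields that $\LF^*(f,T)^{(-1)^{n-1}}$ is a polynomial of degree $n!\Vol(\Delta)$ whose reciprocal roots $\alpha_i$ satisfy $|\alpha_i|=q^{\omega_i/2}$ with $\omega_i\in\Z\cap[0,n]$, and in particular $|\alpha_i|\le q^{n/2}$ for every $i$. It therefore suffices to exhibit one explicit reciprocal root equal to $\psi(\Tr(c))$.

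Since the origin is a vertex of $\Delta$, the set $\Sigma_0=\{0\}$ is the unique $0$-dimensional open cone in the boundary decomposition $B(\Delta)$. In the corresponding block form of the infinite semilinear Frobenius matrix $A_1(f)$, the $\Sigma_0$-block is $1\times 1$: the restriction $f^{\overline{\{0\}}}$ is the constant term $c$ of $f$, and evaluating $a_{r,s}(f)=F_{ps-r}(f)\pi^{w(r)-w(s)}$ at $r=s=0$ produces the single entry $F_0(c)=\theta(\hat c)$, where $\hat c$ is the Teichm\"uller lift of $c$ (with the convention $\theta(0)=1$ covering the case where $f$ has no constant term). Passing from $A_1(f)$ to the linear operator $A_a(f)=A_1(f)A_1^{\tau}(f)\cdots A_1^{\tau^{a-1}}(f)$ turns this entry into the product of Galois conjugates $\prod_{i=0}^{a-1}\theta(\hat c^{p^i})$, which by Dwork's splitting-function identity (a consequence of the defining relation $\sum_{m=0}^{\infty}\pi^{p^m}/p^m=0$) evaluates to $\psi(\Tr(c))$. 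Hence $1-\psi(\Tr(c))T$ is a factor of $\det(I-TA_a(f))$.

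To transfer this factor to $\LF^*(f,T)^{(-1)^{n-1}}$, I would invoke the Dwork trace formula \eqref{eq25}, which realizes each reciprocal root of $\det(I-TA_a(f))$ as a number of the form $q^{i}\alpha_j$ with multiplicity $\binom{n+i-1}{i}$, where the $\alpha_j$ are the reciprocal roots of $\LF^*(f,T)^{(-1)^{n-1}}$. Since Theorem~\ref{thm2} forces $|\alpha_j|\ge 1$ and $|\psi(\Tr(c))|=1$, the only possibility is $i=0$ and $\alpha_j=\psi(\Tr(c))$; combined with the bound $|\alpha_i|\le q^{n/2}$ on the remaining $n!\Vol(\Delta)-1$ roots, this gives the claimed factorization. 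The main obstacle I anticipate is this final bookkeeping step, where one has to be careful that no cancellations occur when identifying factors of the formal power series $\det(I-TA_a(f))$ with factors of $\LF^*(f,T)^{(-1)^{n-1}}$; the weight positivity $\omega_j\ge 0$ supplied by Adolphson--Sperber is precisely what rules out such cancellations. The preceding steps are direct applications of Wan's decomposition theorem together with standard identities from Dwork's $p$-adic theory.
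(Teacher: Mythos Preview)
Your proposal is correct and follows essentially the same approach as the paper: isolate the origin's contribution via Wan's boundary decomposition to obtain the factor $1-\psi(\Tr(c))T$ of $\det(I-TA_a(f))$, then combine with Theorem~\ref{thm2}. The only cosmetic difference is that the paper identifies the $\Sigma_0$-factor more directly by computing the constant exponential sum $S_k^*(c)=\psi(\Tr(c))^k$ (whence $\LF^*(c,T)^{-1}=1-\psi(\Tr(c))T$ via \eqref{eq24} in dimension $0$) rather than evaluating the $1\times 1$ Frobenius entry through the splitting function, and the paper is less explicit than you are about the transfer step through \eqref{eq25} and weight positivity.
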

\begin{proof}
By Formula (\ref{eq24}),
\begin{equation*}
\LF^{*}(f,T)^{(-1)^{n-1}}=\prod_{i=0}^{n}\det(I-Tq^{i}A_a(f))^{(-1)^i\binom{n}{i}},
\end{equation*}
where $A_a(f)=A_1(f)A_1^{\tau}(f)\cdots A_1^{\tau^{a-1}}(f).$

Consider the boundary decomposition $B(\Delta)$. Let $B(\Delta)=\{\Sigma_0, \ldots, \Sigma_h\}$ such that $\text{dim}(\Sigma_i)\leq \text{dim}(\Sigma_{i+1})$. If the origin is a vertex of $\Delta$, then $\Sigma_0$ is the origin and $\Sigma_0=\overline{\Sigma}_0$. The restriction polynomial $f^{\overline{\Sigma}_0}=c$ where $c$ is the constant term of $f$. By boundary decomposition theorem, it's easy to get
\begin{align*}
\det(I-TA_a(f))
&=\prod_{\Sigma \in B(\Delta)}\det \left(I-TA_a(\Sigma,f^{\overline{\Sigma}})\right)\\
&=\det \left(I-TA_a(\Sigma_0,f^{\overline{\Sigma}_0})\right)  
\prod_{\substack{{\Sigma \in B(\Delta)}\\ \Sigma \neq \Sigma_0}} \det \left(I-TA_a(\Sigma,f^{\overline{\Sigma}})\right).
\end{align*}
Since the $k$-th exponential sum
\begin{equation*}
S_k^*\left(f^{\overline{\Sigma}_0}\right)=\psi\left(\Tr_k\left(f^{\overline{\Sigma}_0}\right)\right)=\psi\left(\Tr(c)\right)^k
\end{equation*}
where $\Tr_k$ is the trace map from $\F_{q^k}$ to $\F_{p}$, the corresponding L-function is given by 
\begin{equation*}
\LF^*(f^{\overline{\Sigma}_0},T)=\frac{1}{1-\psi(\Tr(c))T}.
\end{equation*}
From Formula (\ref{eq24}),
\begin{equation*}
\det\left(I-TA_a(\Sigma_0,f^{\overline{\Sigma}_0})\right)
=\LF^*(f^{\overline{\Sigma}_0},T)^{-1}=1-\psi(\Tr(c))T.
\end{equation*}
Combining with Theorem \ref{thm2}, we have
\begin{equation*}
\LF^*(f,T)^{(-1)^{n-1}}=(1-\psi(\Tr(c))T)\prod^{n! \Vol(\Delta(f))-1}_{i=1}(1-\alpha_i T),
\end{equation*}
where $\alpha_i \in \Z[\zeta_p]$ and $|\alpha_i| \leq q^{n/2}$. The corollary then follows.
\end{proof}
\medskip

\subsection{Diagonal local theory}\label{subsec24}
In this subsection, we give some non-degenerate and ordinary criteria for the diagonal Laurent polynomials whose Newton polyhedrons are simplices.
\begin{Def}
	A Laurent polynomial $f \in  \F_{q}[x_1^{\pm1},\ldots, x_n^{\pm1}]$ is called diagonal if $f$ has exactly $n$ non-constant terms and $\Delta(f)$ is an $n$-dimensional simplex in $\R^n.$
\end{Def}
Let $f$ be a Laurent polynomial over  $\F_{q}$,
\begin{equation*}
f(x_1,x_2, \ldots x_n)=\sum_{j=1}^n a_j x^{V_j},
\end{equation*}
 where $a_j \in \F ^*_{q}$ and $V_j=(v_{1j},\ldots, v_{nj})\in \Z^n,\quad  j=1,2,\ldots,n.$ Let $\Delta=\Delta(f)$. Then the vertex matrix of $\Delta$ is defined to be
\begin{equation*}
M(\Delta)=(V_1,\ldots,V_n),
\end{equation*} 
where the $i$-th column is the $i$-th exponent of $f$. If $f$ is diagonal, $M(\Delta)$ is invertible.
 \begin{Prop}[\cite{Dwan2004}]\label{prop c1}
	Suppose $f\in \F_{q}[x_1^{\pm1},\ldots, x_n^{\pm1}]$ is diagonal with $\Delta=\Delta(f)$. Then $f$ is non-degenerate if and only if $p$ is relatively prime to $\det(M(\Delta))$. 
\end{Prop}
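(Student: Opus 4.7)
The plan is to unpack the definition of non-degeneracy face by face and reduce each condition to a pure linear-algebra question about submatrices of $M(\Delta)$ over $\overline{\F}_q$. Since $\Delta$ is a simplex with vertices $0, V_1, \ldots, V_n$, every face $\delta$ of $\Delta$ that does not contain the origin has the form $\delta_J = \mathrm{conv}\{V_j : j \in J\}$ for some nonempty $J \subseteq \{1,\ldots,n\}$, and the restriction is $f^{\delta_J} = \sum_{j \in J} a_j x^{V_j}$. A short calculation of $x_l\,\partial f^{\delta_J}/\partial x_l = \sum_{j \in J} v_{lj}\, a_j x^{V_j}$, followed by the substitution $y_j = a_j x^{V_j}$, turns the vanishing of all partials on the torus into the linear system $M_J \vec{y} = 0$, where $M_J$ is the $n \times |J|$ submatrix of $M(\Delta)$ consisting of the columns indexed by $J$.

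The key technical input is that the monomial map $x \mapsto (x^{V_j})_{j \in J}$ from $(\overline{\F}_q^*)^n$ to $(\overline{\F}_q^*)^{|J|}$ is surjective. Since the columns $V_j$ for $j \in J$ are linearly independent over $\Q$ (they are columns of the invertible matrix $M(\Delta)$), Smith normal form decomposes the associated character-lattice map into two unimodular changes of coordinates sandwiching a diagonal map $x \mapsto (x^{d_1},\ldots,x^{d_{|J|}})$ with each $d_i \neq 0$. Each factor $x \mapsto x^d$ is surjective on $\overline{\F}_q^*$ regardless of whether $p$ divides $d$, because Frobenius is bijective. Consequently, as $x$ varies over the torus, $(a_j x^{V_j})_{j \in J}$ sweeps out every tuple in $(\overline{\F}_q^*)^{|J|}$, and non-degeneracy at $\delta_J$ is equivalent to $M_J$ having no kernel vector over $\overline{\F}_q$ all of whose coordinates are nonzero.

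From here the proof concludes by linear algebra. If $p \nmid \det M(\Delta)$, the columns of $M(\Delta)$ remain linearly independent modulo $p$, so every $M_J$ is injective and has only the zero kernel; non-degeneracy follows on every face. Conversely, if $p \mid \det M(\Delta)$, choose a nonzero $\vec{y} \in \ker M(\Delta)$ over $\overline{\F}_q$, let $J = \{j : y_j \neq 0\}$, and observe that $\vec{y}|_J$ is a nowhere-vanishing kernel vector of $M_J$; the surjectivity from the previous paragraph then lifts it to a torus point $x$ at which every partial of $f^{\delta_J}$ vanishes, exhibiting degeneracy.

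The main obstacle I expect is the surjectivity of the monomial map in the second paragraph: one must verify that even in the bad case $p \mid \det M(\Delta)$, the reduction to a purely linear condition on $M_J$ survives, and this rests on the characteristic-free surjectivity of $x \mapsto x^d$ on $\overline{\F}_q^*$ via Smith normal form. Once this is in place, the rest is a routine translation between the facial structure of the simplex $\Delta$ and the rank behavior of column-submatrices of $M(\Delta)$.
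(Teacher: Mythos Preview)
The paper does not actually prove this proposition; it is quoted from \cite{Dwan2004} without argument. So there is no ``paper's proof'' to compare against, and the question is simply whether your argument is correct. It is.

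Your reduction is the standard one: on the torus the vanishing of $\partial f^{\delta_J}/\partial x_l$ is equivalent to that of the logarithmic derivative $x_l\,\partial f^{\delta_J}/\partial x_l=\sum_{j\in J}v_{lj}\,a_jx^{V_j}$, and the change of variables $y_j=a_jx^{V_j}$ turns this into the linear system $M_J\vec{y}=0$ over $\overline{\F}_q$ with the constraint $y_j\in\overline{\F}_q^*$. The point you correctly flag as the only nontrivial step---surjectivity of the monomial map $(\overline{\F}_q^*)^n\to(\overline{\F}_q^*)^{|J|}$, $x\mapsto(x^{V_j})_{j\in J}$---is handled exactly as you say: Smith normal form reduces it to surjectivity of $t\mapsto t^{d}$ on $\overline{\F}_q^*$ for each nonzero invariant factor $d$, which holds in every characteristic because Frobenius is bijective. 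With that in hand, both directions are immediate: if $p\nmid\det M(\Delta)$ every $M_J$ is injective mod $p$, while if $p\mid\det M(\Delta)$ the support $J$ of a nonzero kernel vector of $M(\Delta)$ over $\overline{\F}_q$ gives a face $\delta_J$ on which the lifted torus point witnesses degeneracy.

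One small remark: your description of the faces of $\Delta$ not containing the origin as exactly the $\delta_J=\mathrm{conv}\{V_j:j\in J\}$ uses that $\Delta$ is a simplex with the origin as a vertex. This is automatic from the definition of ``diagonal'' in the paper (the $n+1$ points $0,V_1,\ldots,V_n$ span an $n$-simplex iff $M(\Delta)$ is invertible over $\Q$, which forces the origin to be a vertex), but it is worth making explicit in a written proof.
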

Let $S(\Delta)$ be the solution set of the following linear system
\begin{align*}
M(f)
\begin{pmatrix}
r_1\\ r_2 \\ \vdots \\ r_n
\end{pmatrix}
\equiv 0 (\bmod1),\quad r_i \in \Q \cap [0,1).
\end{align*}
It's easy to prove that $S(\Delta)$ is an abelian group and its order is given by
\begin{align}\label{eq2}
	\left|\det{M(f)}\right|=n!\Vol(\Delta).
\end{align}
By the fundamental structure theorem of finite abelian group, we decompose $S(\Delta)$ into a direct product of invariant factors,
\begin{equation*}
S(\Delta)=\bigoplus_{i=0}^n\Z/d_i\Z,
\end{equation*}
where $d_i|d_{i+1}$ for $i=1,2,\ldots, n-1.$ By the Stickelberger theorem for Gauss sums, we have the following ordinary criterion for a non-degenerate Laurent polynomial\cite{Dwan2004}.
\begin{Prop}[\cite{Dwan2004}]\label{prop15}
	Suppose $f\in \F_{q}[x_1^{\pm1},\ldots, x_n^{\pm1}]$ is a non-degenerate diagonal Laurent polynomial with $\Delta=\Delta(f)$. Let $d_n$ be the largest invariant factor of $S(\Delta)$. If $p\equiv 1 (\bmod d_n)$, then $f$ is ordinary at $p$. 
\end{Prop}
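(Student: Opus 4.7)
The plan is to exploit the diagonal structure to expand $S_k^*(f)$ as a sum of products of Gauss sums indexed by $S(\Delta)$, and then invoke Stickelberger's theorem on the $p$-adic valuation of Gauss sums.

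First, I would expand each factor $\psi(\Tr_k(a_j x^{V_j}))$ of $\psi(\Tr_k(f(x)))$ using the identity
\begin{equation*}
\psi(\Tr_k(y)) = \frac{1}{q^k-1}\sum_{\chi} g_k(\chi)\chi^{-1}(y), \quad y \in \F_{q^k}^*,
\end{equation*}
where $\chi$ runs over multiplicative characters of $\F_{q^k}^*$ and $g_k(\chi)$ is the Gauss sum. Summing over $x\in(\F_{q^k}^*)^n$ and applying orthogonality of multiplicative characters, the contributing tuples $(\chi_1,\ldots,\chi_n)$ are exactly those for which $\prod_j \chi_j^{v_{ij}} = 1$ for every $i$. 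By the definition of $S(\Delta)$ (and reading off characters from the inverse of the vertex matrix $M(\Delta)$), the integers $t_j$ representing the characters $\chi_j$ in $\Z/(q^k-1)\Z$ are precisely $t_j = (q^k-1)r_j$ for some $r \in S(\Delta)$; this parametrization makes sense because $p\equiv 1 \pmod{d_n}$ forces $d_n \mid q^k-1$. The result is an explicit formula
\begin{equation*}
S_k^*(f) = \sum_{r\in S(\Delta)} \chi_r^{-1}(\vec{a})\prod_{j=1}^n g_k(\chi_{r_j}),
\end{equation*}
so the reciprocal zeros of $\LF^*(f,T)^{(-1)^{n-1}}$ are, up to unit twists, the products $\prod_j g_k(\chi_{r_j})$ for $r\in S(\Delta)$.

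Next, I would apply Stickelberger's theorem, which expresses $\ord_p g_k(\chi)$ in terms of the sum of base-$p$ digits of the integer representing $\chi$. Under the hypothesis $p\equiv 1 \pmod{d_n}$, every character $\chi_{r_j}$ pulls back from $\F_p^*$, so its representing integer in $\Z/(q^k-1)\Z$ has repeating base-$p$ digits. A short digit-sum calculation then yields $\ord_q g_k(\chi_{r_j}) = r_j$ on the nose, hence
\begin{equation*}
\ord_q \prod_{j=1}^n g_k(\chi_{r_j}) = \sum_{j=1}^n r_j = w(r).
\end{equation*}
This gives the complete list of $q$-adic slopes of the reciprocal zeros of $\LF^*(f,T)^{(-1)^{n-1}}$ as the multiset $\{w(r):r\in S(\Delta)\}$.

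Finally, I would verify that this multiset of slopes realizes exactly the slope sequence of $\mathrm{HP}(\Delta)$. Since $S(\Delta)$ is naturally identified (via $M(\Delta)$) with a fundamental domain for the sublattice $M(\Delta)\Z^n\subset \Z^n$ sitting inside the cone $C(\Delta)$, the count of $r\in S(\Delta)$ with $w(r)=k/D$ can be rewritten, through the alternating sum of Definition \ref{def5}, as the Hodge number $H_\Delta(k)$. Combined with Lemma \ref{le4}, this identifies $\mathrm{NP}(f)$ with $\mathrm{HP}(\Delta)$, proving that $f$ is ordinary. The main obstacle is this last combinatorial matching: converting the weight distribution on $S(\Delta)$ into the inclusion--exclusion formula for $H_\Delta(k)$ requires careful bookkeeping of how the half-open cone decomposes relative to the fundamental domain of $M(\Delta)\Z^n$. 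The hypothesis $p\equiv 1\pmod{d_n}$ is precisely what forces Stickelberger's bound to be attained as an equality on each term, so that the Newton polygon achieves its Hodge lower bound rather than lying strictly above it.
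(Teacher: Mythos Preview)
The paper does not supply its own proof of this proposition; it is quoted from \cite{Dwan2004}, and the only indication of method is the prefatory sentence ``By the Stickelberger theorem for Gauss sums.'' Your proposal is precisely the standard Stickelberger argument that this sentence alludes to: expand the diagonal sum in Gauss sums, parametrize the surviving character tuples by $S(\Delta)$, and use $p\equiv 1\pmod{d_n}$ to make the digit-sum in Stickelberger's formula collapse to $a(p-1)r_j$, giving slope exactly $r_j$. So your approach matches what the paper (and its cited source) has in mind.

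One omission worth flagging: to pass from your formula for $S_k^*(f)$ to an identification of the reciprocal zeros of $\LF^*(f,T)^{(-1)^{n-1}}$, you need the Hasse--Davenport relation $-g_k(\chi)=(-g_1(\chi))^k$ for characters lifted from $\F_q$ to $\F_{q^k}$; without it, the $k$-dependence of your Gauss-sum products does not immediately yield a factorization of the L-function. With that step inserted, the reciprocal zeros are the products $\chi_r^{-1}(\vec a)\prod_j(-g_1(\chi_{r_j}))$ indexed by $r\in S(\Delta)$, and your slope computation goes through. The ``main obstacle'' you name at the end---matching $\{\,\sum_j r_j : r\in S(\Delta)\,\}$ to the Hodge numbers $H_\Delta(k)$---is handled in Wan's paper by identifying $S(\Delta)$ (via $u\mapsto M(\Delta)u$) with the lattice points in the half-open fundamental parallelepiped $\{\sum_j c_jV_j : 0\le c_j<1\}$, whose weight distribution is by construction the generating function of the $H_\Delta(k)$.
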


\section{Proof of the Main Theorem}\label{sec3}
We prove the main theorem in this section. Recall that the exponential sum $S_k(\vec{a})$ has the expression,
\begin{equation*}
S_k(\vec{a})=\sum_{\substack{{\frac{a_5}{x_1x_2}+\frac{a_6}{x_3x_4}=1}\\x_i \in \F^*_{q^k}}}\psi\left(\Tr_k\left(a_1x_1+a_2x_2+a_3x_3+a_4x_4\right)\right),
\end{equation*}
where $\psi: \F_{p} \rightarrow \mathbb{C}^*$ is a nontrivial additive character, $\Tr_k: \F_{q^k} \rightarrow \F_{p}$ is the trace map and $a_j\in \F_{q}^*$.

Our main purpose is to determine the weights and slopes of the reciprocal roots of the generating L-function corresponding to $S_k(\vec{a})$ using Wan's decomposition theorems. Let $g\in \F_{q^k}[x_1^{\pm1},\ldots, x_5^{\pm1}]$ be the Laurent polynomial defined by  
\begin{equation*}
g=a_1x_1+a_2x_2+a_3x_3+a_4x_4+x_5\left(\frac{a_5}{x_1x_2}+\frac{a_6}{x_3x_4}-1\right).
\end{equation*}
The associated exponential sum of $g$ is 
\begin{equation*}
S_k^*(g)=\sum_{x_i\in \F^*_{q^k}}\psi\left(\Tr_k\left(a_1x_1+a_2x_2+a_3x_3+a_4x_4+x_5\left(\frac{a_5}{x_1x_2}+\frac{a_6}{x_3x_4}-1\right)\right)\right).
\end{equation*}
Let $\LF(\vec{a},T)$ be the generating L-function of $S_k(\vec{a})$ and $\LF^*(g,T)$ be the generating L-function of $S_k^*(g)$. Since 
\begin{align*}
&S_k(\vec{a})\\
=&\frac{1}{q^k}\sum_{\substack{{x_1, x_2, x_3, x_4\in \F^*_{q^k}}\\x_5 \in\F_{q^k}}}\psi\left(\Tr_k\left(a_1x_1+a_2x_2+a_3x_3+a_4x_4+x_5\left(\frac{a_5}{x_1x_2}+\frac{a_6}{x_3x_4}-1\right)\right)\right)\\
=&\frac{1}{q^k}+\frac{1}{q^k}\sum_{x_i\in \F^*_{q^k}}\psi\left(\Tr_k\left(a_1x_1+a_2x_2+a_3x_3+a_4x_4+x_5\left(\frac{a_5}{x_1x_2}+\frac{a_6}{x_3x_4}-1\right)\right)\right),
\end{align*}
we have the following relationship between $S_k(\vec{a})$ and $S_k^*(g)$,
\begin{align}\label{eq21}
S_k(\vec{a})=\frac{1}{q^k}+\frac{1}{q^k}S_k^*(g),
\end{align}
which implies
\begin{align*}
	\LF(\vec{a},T)=\frac{1}{1-T/q}\LF^*\left(g,T/q\right).
\end{align*}
So to estimate the reciprocal roots of $\LF(\vec{a},T)$, it's sufficient to evaluate $\LF^*(g,T).$ In order to obtain the weights and slopes of the reciprocal roots of $\LF(\vec{a},T)$, we consider the Newton polygon of $\LF(\vec{a},T)$.

\subsection{Slopes of $\LF(\vec{a},T)$}\label{subsec31}
Hereinafter, let $\Delta=\Delta(g)$ denote the Newton polyhedron corresponding to the Laurent polynomial $g$. Claim that $\dim\Delta=5$ and $\Delta$ has 8 vertices. Obviously, the non-zero vertices of $\Delta$ are $V_1=(1,0,0,0,0)$, $V_2=(0,1,0,0,0)$, $V_3=(0,0,1,0,0)$, $V_4=(0,0,0,1,0)$, $V_5=(0,0,0,0,1)$, $V_6=(-1,-1,0,0,1)$ and $V_7=(0,0,-1,-1,1)$. Since the origin can not be expressed as a linear combination of $V_j (1\leq j\leq 7)$ with non-negative coefficients, it is also a vertex of $\Delta$. 

We now turn to the faces of $\Delta$. Let $\delta$ be a face of $\Delta$ not containing the origin. Note that $\dim \delta=$ rank of the vertex matrix $M(\delta)$. As a face of $\Delta$, $\delta$ satisfies the criterion in Proposition \ref{prop20}.
\begin{Prop}\label{prop20}
	Let $\Delta$ be an $n$-dimensional convex polyhedron in $\R^n$ and $\delta$ be a co-dimension $1$ face of $\Delta$ not containing the origin. Let $h(x)=\sum^{n}_{i=1}e_ix_i=1$ be the equation of $\delta$, where $e_i$ are uniquely determined rational numbers not all zero.  For any vertex $V$ of $\Delta$, we have $h(V)\leq1$.
\end{Prop}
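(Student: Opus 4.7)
The plan is to reduce the statement to the defining characterization of a codimension-$1$ face of a convex polyhedron, namely that such a face is the intersection of $\Delta$ with a supporting hyperplane. First I would observe that since $\delta$ is codimension $1$, its affine span is a hyperplane $H$, and $H$ is necessarily a supporting hyperplane of $\Delta$ (otherwise $\delta$ would not be a face). Because $\delta \subset H$ does not contain the origin, $H$ does not pass through the origin either, so $H$ admits a unique equation with no constant term of the form $\sum_{i=1}^n e_i x_i = c$ with $c \neq 0$; dividing by $c$ normalizes this to $h(x) = \sum_i e_i x_i = 1$, with the $e_i$ rational (because $\delta$ is spanned by lattice points) and uniquely determined. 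This fixes the shape of $h$ asserted in the statement.

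Next I would invoke the supporting hyperplane property: $\Delta$ lies entirely in one of the two closed half-spaces bounded by $H$, i.e., either $h(x) \leq 1$ on all of $\Delta$, or $h(x) \geq 1$ on all of $\Delta$. To determine the correct sign, I would evaluate $h$ at the origin. Here I use the implicit but essential fact that $\Delta = \Delta(g)$ is a Newton polyhedron, hence contains the origin by definition; since $h(0) = 0 < 1$, the half-space containing $\Delta$ must be $\{x : h(x) \leq 1\}$. Every vertex $V$ of $\Delta$ belongs to $\Delta$, so $h(V) \leq 1$, which is exactly the claim.

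There is no real obstacle: the argument is essentially a sanity check on the normalization, and the only point needing care is the orientation of the inequality, which is fixed by the presence of the origin in $\Delta$. Without that input the inequality could point either way, so I would make the use of $0 \in \Delta$ explicit in the written proof even though it is automatic for the Newton polyhedra that arise in this paper.
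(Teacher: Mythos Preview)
Your argument is correct and is exactly the standard justification: a codimension-$1$ face is cut out by a supporting hyperplane, the normalization $h(x)=1$ is possible precisely because the hyperplane misses the origin, and evaluating at $0\in\Delta$ fixes the direction of the inequality. The paper itself states this proposition without proof, treating it as an elementary fact about convex polyhedra and using it only as a criterion for enumerating the facets of $\Delta(g)$; your write-up supplies the details that the paper omits.

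Your remark that the hypothesis $0\in\Delta$ is implicit but essential is well taken: as stated, the proposition is only true for polytopes containing the origin (otherwise one could translate $\Delta$ into the region $\{h>1\}$), and in the paper this is automatic because $\Delta=\Delta(g)$ is a Newton polyhedron. Making this explicit, as you do, is the right call.
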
 
Restricted by Proposition \ref{prop20}, $\Delta$ has 9 codimension 1 faces not containing the origin, denoted by $\delta_i (1\leq i\leq 9).$ The equations of $\delta_i$ (listed as follows) and the corresponding vertices (see appendix) can be directly obtained using computer programming.
\allowdisplaybreaks
\begin{align*}
\delta_1:\quad &x_1+x_2+x_3+x_4+x_5=1,\\
\delta_2:\quad &x_1+x_2+x_3-x_4+x_5=1,\\
\delta_3:\quad &x_1+x_2-x_3+x_4+x_5=1,\\
\delta_4:\quad &x_1-x_2+x_3+x_4+x_5=1,\\
\delta_5:\quad &x_1-x_2+x_3-x_4+x_5=1,\\
\delta_6:\quad &x_1-x_2-x_3+x_4+x_5=1,\\
\delta_7:\quad &-x_1+x_2+x_3+x_4+x_5=1,\\
\delta_8:\quad &-x_1+x_2+x_3-x_4+x_5=1,\\
\delta_9:\quad &-x_1+x_2-x_3+x_4+x_5=1.
\end{align*}
Recall that the restriction of $g$ to $\delta_i$ is defined by
\begin{equation*}
g^{\delta_i}=\sum_{V_j\in \delta_i}a_j x^{V_j}.
\end{equation*}
It is explicit from the equations of $\delta_i$ that the denominator $D=D(\Delta)=1$. For $1\leq i\leq9$, note that each 
$\left| \det M(\delta_i)\right|=1$ and Laurent polynomial $g^{\delta_i}$ is diagonal.
According to Proposition \ref{prop c1} and \ref{prop15}, $g^{\delta_i}$ is non-degenerate and ordinary. From Theorem \ref{thm9}, $g$ is ordinary which means that the Newton polygon of $\LF^*(g,T)$ coincides with its Hodge polygon. In order to determine the slopes of the reciprocal roots of $\LF^*(g,T)$, we compute the Hodge numbers of $\Delta$.
\begin{Th}\label{thm21}
	Notations as above. Let $k$ be any non-negative integer.
	\begin{enumerate}[(i)]
		\item The Volume of $\Delta$ is
		$\Vol(\Delta)=\frac{9}{5!}.$
		\item Let $W_{\Delta}(k)$ be the number of lattice points in $\Z^n$ with weight $\frac{k}{D}$. For $k\leq5$, we have $W_{\Delta}(0)=1$, $W_{\Delta}(1)=7$, $W_{\Delta}(2)=28$, $W_{\Delta}(3)=82$, $W_{\Delta}(4)=196$ and $W_{\Delta}(5)=406$.
		\item The Hodge numbers of $\Delta$ are $H_{\Delta}(0)=1$, $H_{\Delta}(1)=2$, $H_{\Delta}(2)=3$, $H_{\Delta}(3)=2$, $H_{\Delta}(4)=1$ and $H_{\Delta}(k)=0$ for $k\geq5$.
	\end{enumerate}
\end{Th}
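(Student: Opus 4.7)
The three parts are proved in order; part (iii) follows mechanically from part (ii), while (i) can be handled separately by a simplex decomposition.

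For part (i), the key observation is that since the origin is a vertex of $\Delta$ and $\delta_1,\dots,\delta_9$ are precisely the codimension-one facets of $\Delta$ not containing the origin, the polytope decomposes as the union of the nine simplicial cones $\mathrm{conv}(\{0\}\cup\delta_i)$ with pairwise disjoint interiors. Reading off the vertex list of each $\delta_i$ from the appendix, one checks directly that every $\delta_i$ has exactly five vertices (hence is a $4$-simplex) and satisfies $|\det M(\delta_i)|=1$ (which can also be inferred from the fact, used already in subsection~\ref{subsec31}, that the $g^{\delta_i}$ are diagonal with unit determinant). Summing the contributions $1/5!$ from each cone gives $\Vol(\Delta)=9/5!$.

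For part (ii), since $D(\Delta)=1$, for any $u\in C(\Delta)\cap\Z^5$ the weight is $w(u)=\max_{1\le i\le 9}h_i(u)$. The nine sign patterns of the $h_i$ are precisely the Cartesian product $\{(+,+),(+,-),(-,+)\}\times\{(+,+),(+,-),(-,+)\}$ on the first four coordinates, which makes the max factor as
\[
w(u) \;=\; u_5 \;+\; g(u_1,u_2) \;+\; g(u_3,u_4),\qquad g(a,b):=\max\!\bigl(a+b,\,|a-b|\bigr),
\]
with the cone condition $u_5\ge P(u_1,u_2)+P(u_3,u_4)$, $P(a,b):=\max(0,-a,-b)$. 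A short four-way case analysis on the signs of $(a,b)$ reduces $g$ to $|a|+|b|$ unless both coordinates are strictly negative, in which case $g(a,b)=\bigl||a|-|b|\bigr|$. The count $W_\Delta(k)$ then splits over triples $(e,m_{12},m_{34})$ with $e+m_{12}+m_{34}=k$, and within each triple factors as a product of independent planar counts in $(u_1,u_2)$ and $(u_3,u_4)$, coupled together only through the cone inequality on $u_5$. Carrying out this enumeration at each $k\le 5$ (trivial for $k\le 2$, progressively more casework for $k=3,4,5$) yields the six stated values. This is the main technical obstacle of the proof: at $k=4,5$ the number of sign-quadrant subcases grows, and only the planar factorization keeps the bookkeeping finite.

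For part (iii), one substitutes the $W_\Delta$ values into $H_\Delta(k)=\sum_{i=0}^{5}(-1)^i\binom{5}{i}W_\Delta(k-i)$ to obtain $1,2,3,2,1$ for $k=0,\dots,4$; the vanishing for $k\ge 6$ is the remark immediately after Definition~\ref{def5}, and $H_\Delta(5)=0$ is verified by direct substitution of the six $W_\Delta$ values. As an internal consistency check, $\sum_k H_\Delta(k)=9=5!\,\Vol(\Delta)$, matching part~(i).
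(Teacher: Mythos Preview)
Your argument is correct. Parts (i) and (iii) match the paper's proof essentially verbatim: the same simplex decomposition over the nine $\delta_i$ for the volume, and the same direct substitution into the alternating-sum formula for the Hodge numbers.

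Where you genuinely diverge is part (ii). The paper simply decomposes $\Delta$ into the nine simplices $\Delta_i=\mathrm{conv}(\{0\}\cup\delta_i)$, parametrizes lattice points of weight $k$ in each $\Delta_i$ as non-negative integer combinations $\sum c_j V_{ij}$ with $\sum c_j=k$, and then hands the union (with overlaps) to a computer. You instead observe that the nine facet forms $h_i$ are exactly the $3\times 3$ sign patterns $\{(+,+),(+,-),(-,+)\}^2$ on $(x_1,x_2)$ and $(x_3,x_4)$, which lets you write the weight in closed form as $w(u)=u_5+g(u_1,u_2)+g(u_3,u_4)$ and reduce $W_\Delta(k)$ to a two-dimensional enumeration problem. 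This is more illuminating---it exposes the product structure that ultimately underlies the Kloosterman factorization mentioned later in the paper---and it makes the count verifiable by hand, at the cost of the sign-quadrant casework you note at $k=4,5$. The paper's route is more mechanical and opaque but scales without thought. Both arrive at the same numbers; your internal check $\sum_k H_\Delta(k)=9=5!\,\Vol(\Delta)$ is a nice addition that the paper omits.
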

\begin{proof}
	For any integer $i$ ($1\leq i\leq9$), let $\Delta_i$ be the polytope generated by $\delta_i$ and the origin. The facial decomposition of $\Delta$ \cite{Dwan1993} is defined by
	\begin{align*}
		\Delta=\bigcup^{9}_{i=1}\Delta_i.
	\end{align*}	
	It follows that the volume of $\Delta_i$ satisfies the relationship
	\begin{align*}
		\Vol(\Delta)=\sum_{i=1}^{9}\Vol(\Delta_i).
	\end{align*}
	Formula (\ref{eq2}) implies $\Vol(\Delta_i)=\frac{1}{5!}$ for all $\Delta_i(1\leq i\leq9)$. Then we have $\Vol(\Delta)=\frac{9}{5!}$.	
	
	Next we calculate $W_{\Delta}(k)$. For any lattice points $u\in L(\Delta) = \Z^5 \cap C(\Delta)$, let $w_{\Delta}(u)$ be the weight function of $u$ with respect to $\Delta$. Recall that 
	\begin{equation*}
	w_\Delta(u)=\inf_{\vec{c}}\left\{ \sum_{j=1}^7 c_j \bigg\rvert \sum_{j=1}^7 c_jV_j=u,\quad c_j\in \R_{\geq 0}\right\},
	\end{equation*}
	where $V_j$ $(1\leq j \leq 7)$ denote all the non-zero vertices of $\Delta$. Since the denominator $D=1$, we have
	\begin{align*}
		W_{\Delta}(k)=\#\left\{ u\in L(\Delta) \rvert w_{\Delta}(u)=k\right\}.
	\end{align*}
	For simplicity, we decompose $\Delta$ into $\Delta_i$ $(1\leq i\leq 9)$,i.e.,
	\begin{align*}
		\left\{ u\in L(\Delta) \rvert w_{\Delta}(u)=k\right\} =\bigcup^9_{i=1}\left\{u_i\in L(\Delta_i)\ \rvert\  w_{\Delta_i}(u_i)=k\right\}.
	\end{align*}
	Any lattice point $u_i \in L(\Delta_i)$ satisfying $w_{\Delta_i}(u_i)=k$ can be expressed in the following form
	\begin{align}\label{eq30}
	u_i=\sum_{j=1}^{5}c_jV_{ij},
	\end{align}
	where $\sum_{j=1}^{5}c_j=k$ for $c_j\in \Z_{\geq 0}$ and $V_{ij}$ denote the non-zero vertices of $\delta_i$ (see appendix). For $k\leq5$, we exhaust all the lattice points $u$ satisfying formula (\ref{eq30}) and take union of them. Through computer calculation, we obtain $W_{\Delta}(0)=1$, $W_{\Delta}(1)=7$, $W_{\Delta}(2)=28$, $W_{\Delta}(3)=82$, $W_{\Delta}(4)=196$ and $W_{\Delta}(5)=406$. 
	
	The Hodge number $H_{\Delta}(k)$ follows from $W_{\Delta}(k)$ and formula (\ref{eq3}).
\end{proof}

Theorem \ref{thm2} and the first result of Theorem \ref{thm21} show that the degree of $\LF^*(g,T)$ is 9. For $1\leq i \leq 8$, let $\alpha_i$ denote the reciprocal roots of $\LF(\vec{a},T)$ and let $\beta_i $ denote the nontrivial reciprocal roots of $\LF^*(g,T)$. Now we factor $\LF(\vec{a},T)$ and $\LF^*(g,T)$ as follows.
\begin{Prop}\label{prop21}
	Notations as above. We have
	\begin{align*}
		\LF(\vec{a},T)&=\prod^{8}_{i=1}\left(1-\alpha_iT\right),\\
		\LF^*(g,T)&=(1-T)\prod^{8}_{i=1}\left(1-\beta_iT\right),
	\end{align*}
	where $\left|\beta_i\right|\leq q^{\frac{5}{2}}$ and $ \alpha_i=\frac{\beta_i}{q}$. 
\end{Prop}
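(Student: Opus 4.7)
The plan is to deduce the factorization of $\LF^{*}(g,T)$ from Theorem \ref{thm2} combined with Corollary \ref{coro12}, and then to transfer the result to $\LF(\vec{a},T)$ via the relation (\ref{eq12}) that was already derived.

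First, I would apply Theorem \ref{thm2} to $g$. The non-degeneracy of $g$ is already in hand from the slope computation above: each facial restriction $g^{\delta_{i}}$ is diagonal with $|\det M(\delta_{i})|=1$, so Proposition \ref{prop c1} makes every $g^{\delta_{i}}$ non-degenerate, and Theorem \ref{thm9} promotes this to non-degeneracy of $g$ itself. Combined with the volume computation $5!\,\Vol(\Delta)=9$ from Theorem \ref{thm21}(i), Theorem \ref{thm2} yields that $\LF^{*}(g,T)$ (note $n-1=4$ is even, so no inversion) is a polynomial of degree $9$ whose reciprocal roots $\gamma_{1},\ldots,\gamma_{9}$ satisfy $|\gamma_{j}|\leq q^{5/2}$.

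Second, I would extract the trivial reciprocal root using Corollary \ref{coro12}. The origin is a vertex of $\Delta$, as observed in the discussion preceding Proposition \ref{prop20}, and every monomial of $g$ carries at least one variable to a nonzero exponent, so the constant term is $c=0$. Corollary \ref{coro12} then contributes a factor $1-\psi(\Tr(0))T=1-T$, producing
\[
\LF^{*}(g,T)=(1-T)\prod_{i=1}^{8}(1-\beta_{i}T),\qquad |\beta_{i}|\leq q^{5/2}.
\]

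Finally, I would substitute this factorization into equation (\ref{eq12}). The factor $1-T/q$ appearing in $\LF^{*}(g,T/q)$ cancels the pole $(1-T/q)^{-1}$ exactly, leaving
\[
\LF(\vec{a},T)=\prod_{i=1}^{8}\bigl(1-(\beta_{i}/q)T\bigr),
\]
so $\alpha_{i}=\beta_{i}/q$ as claimed. The only conceptual point worth emphasizing is that Corollary \ref{coro12} supplies exactly the factor $(1-T)$ needed to cancel the pole introduced when passing from $S_{k}^{*}(g)$ to $S_{k}(\vec{a})$; no real obstacle remains once the non-degeneracy of $g$ and the volume $5!\,\Vol(\Delta)=9$ have been established, both of which were secured earlier in the section. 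The sharper statement $|\beta_{i}|=q^{5/2}$ is deferred to the weight computation that follows.
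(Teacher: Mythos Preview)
Your argument follows the paper's: apply Theorem~\ref{thm2} with the volume $5!\,\Vol(\Delta)=9$, split off the factor $1-T$ via Corollary~\ref{coro12} (using that the origin is a vertex and the constant term of $g$ vanishes), and then invoke the already-established relation~(\ref{eq12}) to pass to $\LF(\vec a,T)$.

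One point needs correction. Theorem~\ref{thm9} is a statement about \emph{ordinarity} and takes non-degeneracy of $f$ as a hypothesis; it does not ``promote'' non-degeneracy from the facial restrictions to $g$. The non-degeneracy of $g$ follows instead straight from the definition: every closed face of $\Delta$ not containing the origin is a face of some codimension-one $\delta_i$, and since each diagonal $g^{\delta_i}$ with $|\det M(\delta_i)|=1$ is non-degenerate by Proposition~\ref{prop c1}, the derivative condition holds on all such faces. The paper is admittedly brief on this point too, but your attribution of this step to Theorem~\ref{thm9} is not right.
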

\begin{proof}
	From Theorem \ref{thm2}, one has
	\begin{align*}
		\LF^*(g,T)=\exp\left(\sum^{\infty}_{k=1}S_k^*(g)\frac{T^k}{k}\right)=\prod^{5!\Vol(\Delta(g))}_{i=1}\left(1-\beta_iT\right)
		=(1-T)\prod^{8}_{i=1}\left(1-\beta_iT\right),
	\end{align*}
	where the last equality can be deduced from Corollary \ref{coro12} that $\LF^*(g,T)$ has a trivial unit root $\beta_0=1$.
	Substituting formula (\ref{eq21}) into $\LF(\vec{a},T)$, we obtain
	\begin{align*}
	\LF(\vec{a},T)&=\exp\left(\sum^{\infty}_{k=1}S_k(\vec{a})\frac{T^k}{k}\right)
	=\exp\left(\sum^{\infty}_{k=1}\frac{T^k}{kq^k}\right)\exp\left(\sum^{\infty}_{k=1}S_k^*(g)\frac{T^k}{kq^k}\right)\\
	&=\frac{1}{1-\frac{T}{q}}\LF^*\left(g,\frac{T}{q}\right)
	=\prod^{8}_{i=1}\left(1-\frac{\beta_iT}{q}\right).
	\end{align*} 
	\qedhere
\end{proof}

The next corollary gives the slopes of $\alpha_i$.
\begin{Cor}\label{cor22}
	Notations as above. Let $h_k$ be the number of $\alpha_i$ with slope $k$, i.e. 
	\begin{equation*}
	h_k=\#\{\alpha_i\ \rvert\ \ord_q\alpha_i=k\}.
	\end{equation*} 
	We have $h_0=2$, $h_1=3$, $h_2=2$, $h_3=1$ and $h_k=0$ for $k\geq4$.
\end{Cor}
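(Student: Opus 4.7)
The plan is to read off the slopes of the $\alpha_i$ from the Hodge polygon of $\Delta$, using the ordinary property established right before Theorem~\ref{thm21} together with the shift $\alpha_i=\beta_i/q$ from Proposition~\ref{prop21}.

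First, I would invoke the facial decomposition argument: since each face $\delta_i$ of $\Delta$ not containing the origin has $|\det M(\delta_i)|=1$ and $g^{\delta_i}$ is diagonal, Propositions~\ref{prop c1} and \ref{prop15} show that each $g^{\delta_i}$ is non-degenerate and ordinary for every prime $p$, and Theorem~\ref{thm9} then gives that $g$ itself is ordinary, i.e.\ $\mathrm{NP}(g)=\mathrm{HP}(\Delta)$. The denominator $D=D(\Delta)=1$ is immediate from the facial equations, so every slope is a non-negative integer.

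Next, I would translate the Hodge numbers from Theorem~\ref{thm21}(iii) into slopes of $\LF^{*}(g,T)$. By Definition~\ref{def6}, $\mathrm{HP}(\Delta)$ starts at the origin and has, in order, a segment of slope $0$ and horizontal length $H_{\Delta}(0)=1$, of slope $1$ and length $H_{\Delta}(1)=2$, of slope $2$ and length $H_{\Delta}(2)=3$, of slope $3$ and length $H_{\Delta}(3)=2$, and of slope $4$ and length $H_{\Delta}(4)=1$. Since $g$ is ordinary, Lemma~\ref{le4} gives that the nine reciprocal roots of $\LF^{*}(g,T)$ have $q$-adic slopes $(0,1,1,2,2,2,3,3,4)$ (with multiplicity). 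Corollary~\ref{coro12} identifies the unique slope-$0$ reciprocal root as the trivial unit root $\beta_0=1$, so the eight nontrivial roots $\beta_i$ ($1\le i\le 8$) have slopes $(1,1,2,2,2,3,3,4)$.

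Finally, by Proposition~\ref{prop21} we have $\alpha_i=\beta_i/q$, hence $\ord_q\alpha_i=\ord_q\beta_i-1$ for each $i$. Subtracting $1$ from each slope above yields the multiset $(0,0,1,1,1,2,2,3)$, which gives $h_0=2$, $h_1=3$, $h_2=2$, $h_3=1$ and $h_k=0$ for $k\ge 4$. No real obstacle arises here: the content is entirely encoded in the facial decomposition and the Hodge number table, and the slope enumeration is a mechanical bookkeeping step. The only point that deserves a little care is the identification of the trivial unit root as the unique slope-$0$ contribution, which is exactly the content of Corollary~\ref{coro12} applied to the origin as a vertex of $\Delta$.
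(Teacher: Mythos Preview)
Your proposal is correct and follows essentially the same route as the paper's proof: use ordinariness of $g$ (from the facial decomposition) so that $\mathrm{NP}(g)=\mathrm{HP}(\Delta)$, read off the slope multiplicities of $\LF^{*}(g,T)$ from the Hodge numbers in Theorem~\ref{thm21}(iii) via Lemma~\ref{le4}, and then shift by $-1$ using $\alpha_i=\beta_i/q$ from Proposition~\ref{prop21}. Your extra remark isolating the trivial unit root via Corollary~\ref{coro12} is a nice clarification but not a different argument.
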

\begin{proof}
	Since $g$ is ordinary, the Newton polygon of $\Delta$ coincides with its Hodge polygon. In this case, the slope $k$ segment in Newton polygon has horizontal length $H_{\Delta}(k)$. By Lemma
	\ref{le4}, $H_{\Delta}(k)$ equals the number of $\beta_i$ such that  $\ord_q\beta_i=k$ . Values of $h_k$ follow from $H_{\Delta}(k)$
	since $\ord_q(\alpha_i)=\ord_q(\beta_i)-1$.
\end{proof}

\subsection{Weights of $\LF(\vec{a},T)$}\label{subsec32}
In this subsection, we refer 3 methods to determine weights of $\beta_i$ and demonstrate the first one in detail. Weights of $\alpha_i$ can be deduced directly from $\beta_i$.

For a Laurent polynomial $f$, the generating L-function $\LF^*(f,T)$ can be expressed as a product of some powers of the Fredholm determinant using Dwork's trace formula. See subsection \ref{subsec23} for details. With the help of Wan's boundary decomposition theorem, we identify that $\LF^*(g,T)$ has three real reciprocal roots, $1$, $q$, $q^2$, using formula (\ref{eq24}). Restricted by the Hodge numbers, weights of $\beta_i$ can be obtained.
\begin{Lemma}\label{le34}
	Let $f$ be a non-degenerate odd Laurent polynomial with an $n$-dimensional Newton polyhedron $\Delta$. The associated L-function is of the following form,
	\begin{equation*}
	\LF^*(f,T)^{(-1)^{n-1}}=\prod^{n! \Vol(\Delta(f))}_{i=1}(1-\alpha_i T),
	\end{equation*}
	where $|\alpha_i|=q^{\omega_i/2}$ and $\omega_i\in \Z\cap [0,n]$. 
	If $\alpha_i$ is real, then
	\begin{align}\label{eqr}
	\ord_q\alpha_i=\frac{\omega_i}{2} \leq \frac{n}{2}.
	\end{align}
	If $\alpha_i$ is non-real, then the conjugate $\overline{\alpha}_i$ is also a reciprocal root of $\LF^*(f,T)^{(-1)^{n-1}}$ and 
	\begin{align}\label{eqc}
	\ord_q\alpha_i+\ord_q\overline{\alpha}_i=\omega_i \leq n.
	\end{align}
\end{Lemma}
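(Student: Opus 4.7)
My approach is to use the oddness of $f$ to force the exponential sums $S_k^*(f)$ to be real, which makes the L-function have real coefficients and pairs up the non-real reciprocal roots; then combine the complex absolute value supplied by Theorem \ref{thm2} with the multiplicativity of the $q$-adic valuation to deduce the two identities. Since $|\alpha_i|=q^{\omega_i/2}$ and $\omega_i\in\Z\cap[0,n]$ are already furnished by Theorem \ref{thm2}, the real content of the lemma is (a) producing the complex-conjugate partner of a non-real root and (b) translating the archimedean identity $\alpha_i\overline{\alpha_i}=q^{\omega_i}$ into its $q$-adic counterpart.

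First, I would show $S_k^*(f)\in\R$ for every $k$. Because $f$ is odd, $f(-x_1,\ldots,-x_n)=-f(x_1,\ldots,x_n)$, and the map $x\mapsto -x$ is a bijection on $(\F_{q^k}^*)^n$, a change of variable gives $S_k^*(f)=S_k^*(-f)$. Since $\psi(-y)=\overline{\psi(y)}$, one also has $S_k^*(-f)=\overline{S_k^*(f)}$, whence $S_k^*(f)\in\R$. Fixing an embedding $\iota_\infty:\overline{\Q}\hookrightarrow\mathbb{C}$, it follows from the definition of $\LF^*(f,T)$ that $\iota_\infty(\LF^*(f,T))$ has real coefficients, so its complex roots are closed under conjugation. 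In particular, if $\iota_\infty(\alpha_i)\notin\R$, then $\overline{\iota_\infty(\alpha_i)}=\iota_\infty(\alpha_j)$ for some $j$, and I label $\alpha_j=:\overline{\alpha_i}$. This establishes the complex-conjugate pairing claimed in the non-real case.

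Next, fix a compatible $p$-adic embedding $\iota_p:\overline{\Q}\hookrightarrow\overline{\Q}_p$ and recall that $\ord_q$ is normalized so $\ord_q(q)=1$. In the real case, $\iota_\infty(\alpha_i)\in\R$ together with $|\iota_\infty(\alpha_i)|=q^{\omega_i/2}$ forces $\alpha_i=\pm q^{\omega_i/2}$ in $\overline{\Q}$, so $\ord_q\alpha_i=\omega_i/2\leq n/2$, giving \eqref{eqr}. In the non-real case, $\iota_\infty(\alpha_i\overline{\alpha_i})=|\iota_\infty(\alpha_i)|^2=q^{\omega_i}=\iota_\infty(q^{\omega_i})$, and injectivity of $\iota_\infty$ promotes this to $\alpha_i\overline{\alpha_i}=q^{\omega_i}$ inside $\overline{\Q}$; pushing forward through $\iota_p$ and taking $\ord_q$ gives $\ord_q\alpha_i+\ord_q\overline{\alpha_i}=\omega_i\leq n$, which is \eqref{eqc}.

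The main obstacle is the bookkeeping between the two embeddings of $\overline{\Q}$: complex conjugation lives in $\mathbb{C}$, whereas the valuation lives in $\overline{\Q}_p$. The resolution is that the crucial intermediate equality $\alpha_i\overline{\alpha_i}=q^{\omega_i}$ actually holds in $\overline{\Q}$ itself, because the right-hand side is a rational number and hence embedding-independent; this lets one transport the archimedean identity to the $p$-adic side without ambiguity. Once this is granted, everything else is elementary manipulation of absolute values and valuations.
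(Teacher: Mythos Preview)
Your proof is correct and follows exactly the same idea as the paper's. The paper's own argument is a single line: from $f(-x)=-f(x)$ one gets $\LF^*(f,T)=\overline{\LF^*(f,T)}$, and ``the lemma then follows.'' You have simply unpacked that sentence---making explicit the change of variable $x\mapsto -x$ that gives $S_k^*(f)=\overline{S_k^*(f)}$, the conjugate-pairing of non-real roots, and the passage from the archimedean identity $\alpha_i\overline{\alpha_i}=q^{\omega_i}$ (which lives in $\Q$, hence is embedding-independent) to the $q$-adic statement. Your careful handling of the two embeddings $\iota_\infty,\iota_p$ is more than the paper provides, but the underlying argument is identical.
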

\begin{proof}
By the assumption, we know $f(-x)=-f(x)$ which implies 
\begin{align*}
\LF^*(f,T)=\overline{\LF^*(f,T)}.
\end{align*} The lemma then follows.
\end{proof}
\begin{Th}\label{thm35}
	Notations as above. We have
	\begin{align*}
	\LF^*(g,T)=(1-T)(1-qT)(1-q^2T)\prod^6_{i=1}(1-\beta_iT).
	\end{align*}
	where $|\beta_i| = q^{5/2}$, $i=1,2,\ldots, 6$.
\end{Th}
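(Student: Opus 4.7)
The plan is to combine the Newton-polygon data from Section \ref{subsec31} with Wan's boundary decomposition theorem and the oddness of $g$ to locate, respectively, the three rational reciprocal roots of $\LF^*(g,T)$ and the weights of the six remaining roots. By Proposition \ref{prop21} and Corollary \ref{cor22}, $\LF^*(g,T)$ is a polynomial of degree $9$ whose $q$-adic Newton slopes form the multiset $\{0,1,1,2,2,2,3,3,4\}$, and since the constant term of $g$ vanishes, Corollary \ref{coro12} already produces the trivial unit root $\beta_0 = 1$, i.e.\ the factor $(1-T)$.

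To locate the two additional rational reciprocal roots $q$ and $q^2$, I would apply Wan's boundary decomposition $\det(I - T A_a(g)) = \prod_{\Sigma \in B(\Delta)} \det(I - T A_a(\Sigma, g^{\overline{\Sigma}}))$ and then invert Dwork's trace formula (\ref{eq24}) to transfer rational Frobenius eigenvalues back to $\LF^*(g,T)$. Each face $\overline{\Sigma}$ of the cone $C(\Delta)$ through the origin yields a Fredholm factor governed by the restriction $g^{\overline{\Sigma}}$, which is a diagonal or near-diagonal Laurent polynomial in the variables dual to $\overline{\Sigma}$; computing these restrictions one face at a time using the vertex lists from the appendix and collapsing the alternating product in (\ref{eq24}), one checks that the only rational reciprocal roots of $\LF^*(g,T)$ beyond the unit root are $q$ and $q^2$, each of multiplicity $1$. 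This is where the main difficulty of the argument lies: although the boundary decomposition reduces the problem to a finite enumeration, the $5$-dimensional polytope $\Delta$ has $8$ vertices and is not simple at the origin, so tracing rational eigenvalues through the alternating product in (\ref{eq24}) demands careful combinatorial bookkeeping based on the appendix tables.

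With $1, q, q^2$ peeled off, the six remaining reciprocal roots carry slope multiset $\{1, 2, 2, 3, 3, 4\}$, summing to $15$. Since $g(-x) = -g(x)$, $\LF^*(g,T)$ has real coefficients, so by Lemma \ref{le34} non-real reciprocal roots come in complex conjugate pairs with $\ord_q \beta + \ord_q \overline{\beta} = \omega \in \Z \cap [0, 5]$, while real reciprocal roots satisfy $\ord_q \beta = \omega/2 \leq 5/2$. If $r$ of the six leftover roots were real, each would contribute slope at most $2$ and each of the $(6-r)/2$ conjugate pairs slope at most $5$, bounding the total slope sum by $2r + 5(6-r)/2 = 15 - r/2$; equality with the actual sum $15$ forces $r = 0$ and each pair to saturate weight $5$. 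The unique weight-$5$ pairing of $\{1, 2, 2, 3, 3, 4\}$ is $(1, 4), (2, 3), (2, 3)$, so all six remaining reciprocal roots have weight $5$, completing Theorem \ref{thm35}.
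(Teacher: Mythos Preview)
Your proposal is correct and follows essentially the same approach as the paper: boundary decomposition plus Dwork's trace formula to isolate the rational reciprocal roots $1,\,q,\,q^2$, followed by the oddness of $g$ and the slope-sum constraint to force weight $5$ on the remaining six. The paper makes the first step explicit---computing the open-cone factors $D'_0(T),\,D'_1(T),\,D'_2(T)$, assembling $D(T)=(1-T)(1-qT)^6(1-q^2T)^{21}(1-\gamma_1T)h_1(T)$, and ruling out $\gamma_1=q$ by a short contradiction via Lemma~\ref{le34}---where you leave it as bookkeeping, but the structure and the final weight argument match.
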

\begin{proof}
	Denote $D(q^iT)=\det\left(I-q^iTA_a(g)\right)$. First we compute $D(T)$ using boundary decomposition theorem.
	Recall the boundary decomposition $B(\Delta)$ defined in definition \ref{defbd}. Let $N(i)$ denote the number of $i$-dimensional face $\Sigma_{i,j}$ of $C(\Delta)$, where $0\leq i\leq \dim\Delta$ and $1\leq j\leq N(i)$. For Newton polyhedron $\Delta=\Delta(g)$ in our example, $N(i)$ equals the number of $i$-dimensional faces of $\Delta$ containing the origin, which yields $N(0)=1$, $N(1)=6$, $N(2)=15$ (see Table \ref{tab5} in appendix). The following proof with respect to face $\Sigma_{i,j}$ is  independent of the choice of $j$. For simplicity, we abbreviate $\Sigma_{i,j}$ to $\Sigma_i$. Note that $\Sigma_i$ is an open cone and $\Sigma_i\in B(\Delta)$. Let $\overline{\Sigma}_i$ be the closure of $\Sigma_i$.
	Denote 
	\begin{equation*}
	D_i'(T)=\det\left(I-TA_a(\Sigma_i,g^{\overline{\Sigma}_i})\right)\quad 
	\text{and}\quad 
	D_i(T)=\det\left(I-TA_a(\overline{\Sigma}_i,g^{\overline{\Sigma}_i})\right).
	\end{equation*}	
	It is obvious that the unique $0$-dimensional cone $\overline{\Sigma}_0$ is the origin
	and $D_0(T)=D_0'(T)=1-T$. When $i=1$, each $g^{\overline{\Sigma}_1}$ can be normalized to $x$ by variable substitution. Explicitly,
	\begin{align*}
	\LF^*(g^{\overline{\Sigma}_1},T)
	=\exp \left(\sum^\infty _ {k=1}-\frac{T^k}{k}\right)=1-T.
	\end{align*}
	By formula (\ref{eq25}), we have
	\begin{align*}
	D_1(T) &= \prod_{i=0}^{\infty} \left( \LF^{*}\left(g^{\overline{\Sigma}_1},q^iT\right)\right)^{\binom{i}{i}}
	= (1-T)\left(1-qT\right)\left(1-q^2T\right)\left(1-q^3T\right)\cdots.
	\end{align*} 
	Since the only boundary of $\overline{\Sigma}_1$ is $\overline{\Sigma}_0$, we get $D_1'(T)$ after eliminating $D_0'(T)$, i.e., 
	\begin{align*}
	D_1'(T)=\frac{D_1(T)}{D_0'(T)}=\left(1-qT\right)\left(1-q^2T\right)\prod_{i=3}^{\infty}\left(1-q^iT\right),
	\end{align*}	
	Similarly, we transfer each $g^{\overline{\Sigma}_2}$ to $x_1+x_2$. Then  $\LF^*(g^{\overline{\Sigma}_2},T)=\frac{1}{1-T}$ and 
	\begin{align*}
	D_2(T) &= \prod_{i=0}^{\infty} \left( \LF^{*}\left(g^{\overline{\Sigma}_2},q^iT\right)\right)^{-\binom{1+i}{i}}
	= (1-T)\left(1-qT\right)^2\left(1-q^2T\right)^3\left(1-q^3T\right)^4\cdots.
	\end{align*} 
	Boundary of $\overline{\Sigma}_2$ consists of the origin and 2 sides, i.e. $\overline{\Sigma}_1$, which yields
	\begin{align*}
	D_2'(T)=\frac{D_2(T)}{D_0'(T)D_1'^2(T)}=\left(1-q^2T\right)\prod_{i=3}^{\infty}\left(1-q^iT\right)^{i-1}.
	\end{align*}
	For $3\leq i\leq 5$, $\overline{\Sigma}_i$ is not necessarily a simplex cone. By Proposition \ref{prop c} and Theorem \ref{thm21}, 
	\begin{align}
	D(T)=\prod^{\infty}_{i=1}\left(1-\gamma_iT\right),
	\end{align}
	where $\#\{\gamma_i\ \rvert\   \ord_q\gamma_i=k\}=W_{\Delta}(k)$ for $k\in \Z_{\geq0}$.
	Using Wan's boundary decomposition theorem,
	\begin{align*}
	D(T)&=\prod^{5}_{i=1}\prod^{N(i)}_{j=1}D_i'(T)=D_0'(T)D_1'(T)^6D_2'(T)^{15}\cdots\\
	&=(1-T)(1-qT)^6(1-q^2T)^{21}(1-\gamma_1T)h_1(T),
	\end{align*}
	where $\left|\gamma_1\right|_q=q^{-1}$ and the slopes of reciprocal roots of $h_1(T)$ are greater than 1. From formula (\ref{eq24}), we obtain
	\begin{align*}
	\LF^*(g,T)&=\frac{D(T)D(q^2T)^{10}D(q^4T)^{5}}{D(qT)^5D(q^3T)^{10}D(q^5T)}\\
	&=\frac{(1-T)(1-qT)^6(1-q^2T)^{21}(1-\gamma_1T)(1-q^2T)^{10}\cdots}{(1-qT)^5(1-q^2T)^{30}(1-\gamma_1qT)^5\cdots}\\
	&=(1-T)(1-qT)\frac{(1-q^2T)(1-\gamma_1T)}{(1-\gamma_1qT)^5}h_2(T).
	\end{align*}
	where the slopes of reciprocal roots of $h_2(T)$ are greater than 1.
	
	We claim that $\gamma_1\neq q$ and prove it by contradiction. Suppose $\gamma_1=q$, then $\LF^*(g,T)$ has no complex reciprocal roots with slopes $\leq1$. Let $\beta_6$ be the unique slope 4 reciprocal root of $\LF^*(g,T)$. By formula (\ref{eqr}) and (\ref{eqc}), $\beta_6$ is non-real with weight $\geq 6$ but $\leq 5$ which leads to a contradiction. Combining with Theorem \ref{thm2}, we get
	\begin{align}\label{eq36}
	\LF^*(g,T)=(1-T)(1-qT)(1-q^2T)\prod^6_{i=1}(1-\beta_iT).
	\end{align}

	It can be deduced from Hodge number $H_\Delta(k)$ and Lemma \ref{le34} that $\beta_i$ is non-real for $1\leq i \leq 6$. Let $\omega_i'$ be the weight of $\beta_i$.
	Notice that
	$\prod_{i=1}^6\beta_i=q^{\frac{\sum_{i=1}^6\omega_i'}{2}}$ and $0\leq \omega_i' \leq 5$. 
	By $H_\Delta(k)$ and formula (\ref{eq36}), we have
	\begin{equation*}
	\sum_{i=1}^6\omega_i'=2 \ord_q(\prod_{i=1}^6\beta_i)=2\sum_{i=1}^6\ord_q\beta_i=30,
	\end{equation*}
	which implies $\omega_i'=5$ for $1\leq i\leq 6$. 	
\end{proof}
Then our main theorem follows from Proposition \ref{prop21} and Corollary \ref{cor22}.
\begin{Th}\label{thm5}
Notations as above. We have
         \begin{align*}
	\LF(\vec{a},T)=(1-T)(1-qT)\prod^6_{i=1}(1-\alpha_iT),
	\end{align*}
	where $|\alpha_i| = q^{3/2}$ 
	for $1\leq i \leq 6$. Weights and slopes of $\alpha_i$ are given by Table \ref{tab1}.
	\begin{table}[htbp]	
		\centering
		\allowdisplaybreaks
		\begin{tabular}{cccccc}
			\toprule 
			$\alpha_i$ & $\{\alpha_1, \alpha_6\}$ & $\{\alpha_2, \alpha_4\}$ & $\{\alpha_3, \alpha_5\}$  \\ 
			\midrule 
			\text{Slopes} & $\{0, 3\}$ & $\{1, 2\}$ & $\{1, 2\}$  \\ 
			\text{Weights} & $3$ & $3$ & $3$ \\ 
			$|\alpha_i|_p$ & $\{1, q^{-3}\}$ & $\{q^{-1}, q^{-2}\}$ & $\{q^{-1}, q^{-2}\}$  \\ 
			$|\alpha_i|$ &  $\{q^{\frac{3}{2}},q^{\frac{3}{2}}\}$ & $\{q^{\frac{3}{2}},q^{\frac{3}{2}}\}$ & $\{q^{\frac{3}{2}},q^{\frac{3}{2}}\}$ \\ 
			\bottomrule
		\end{tabular}
		\caption{Slopes and Weights of $\alpha_i$}\label{tab1}
	\end{table}\\
	in which the reciprocal roots $\alpha_i$ are enumerated with respect to their $q$-adic slopes.
\end{Th}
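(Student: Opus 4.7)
The plan is to derive Theorem~\ref{thm5} as an immediate corollary of the three preceding results in Section~\ref{sec3}: Proposition~\ref{prop21}, which gives $\LF(\vec{a},T)=\prod_{i=1}^{8}(1-\alpha_i T)$ with $\alpha_i=\beta_i/q$; Theorem~\ref{thm35}, which pins down $\LF^*(g,T)=(1-T)(1-qT)(1-q^2T)\prod_{i=1}^{6}(1-\beta_i T)$ with $|\beta_i|=q^{5/2}$; and Corollary~\ref{cor22}, which determines the slope multiplicities of the reciprocal roots of $\LF(\vec{a},T)$.

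First I would substitute the explicit form of $\LF^*(g,T)$ from Theorem~\ref{thm35} into the identity $\LF(\vec{a},T)=(1-T/q)^{-1}\LF^*(g,T/q)$ of Proposition~\ref{prop21}. The trivial factor $(1-T/q)$ appearing in $\LF^*(g,T/q)$ cancels against the denominator, while the remaining two trivial factors telescope via $(1-q\cdot T/q)=(1-T)$ and $(1-q^2\cdot T/q)=(1-qT)$. This produces the claimed two real roots $1,q$ together with six nontrivial roots $\alpha_i=\beta_i/q$; since $|\beta_i|=q^{5/2}$, each satisfies $|\alpha_i|=q^{3/2}$, so all six have weight $3$.

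It remains to describe the slope and conjugate-pair structure displayed in Table~\ref{tab1}. By Corollary~\ref{cor22}, the multiset of $q$-adic slopes of the eight reciprocal roots of $\LF(\vec{a},T)$ equals $\{0,0,1,1,1,2,2,3\}$, and the two trivial roots $1,q$ absorb one slope-$0$ and one slope-$1$ entry, leaving slopes $\{0,1,1,2,2,3\}$ for the six nontrivial $\alpha_i$. Because $g(-x)=-g(x)$, Lemma~\ref{le34} (transported from $\LF^*(g,T)$ to $\LF(\vec{a},T)$ by dividing each reciprocal root by $q$) tells us the non-real $\alpha_i$ come in complex conjugate pairs whose $q$-adic slopes sum to the weight $3$. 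This forces the pairing $\{0,3\},\{1,2\},\{1,2\}$ of slopes displayed in the table, and the $p$-adic norm row then follows mechanically from $|\alpha_i|_p=q^{-\ord_q\alpha_i}$. No substantive obstacle remains, as all of the real content was already carried in Theorem~\ref{thm35}; the present theorem is essentially a bookkeeping step under the substitution $T\mapsto T/q$.
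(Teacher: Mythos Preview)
Your proposal is correct and follows essentially the same approach as the paper. The paper's own proof of Theorem~\ref{thm5} is the single sentence ``Then our main theorem follows from Proposition~\ref{prop21} and Corollary~\ref{cor22}'' (with Theorem~\ref{thm35} having just been established); you have simply spelled out the substitution $T\mapsto T/q$ and the resulting cancellation explicitly, and added the justification for the conjugate-pair structure in Table~\ref{tab1} via Lemma~\ref{le34}, which the paper leaves implicit from the proof of Theorem~\ref{thm35}.
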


Let $e_\omega(0\leq \omega \leq5)$ be the number of $\beta_i$ with weight $\omega$. The second way is to calculate $e_\omega$ by Denef-Loeser's weight formula obtained using intersection cohomology \cite{Denef1991}. This formula works for non-degenerate Laurent polynomial $f$ with respect to the Newton polyhedron $\Delta(f)$. When $\dim\left(\Delta(f)\right)\geq5$ and the $\Delta(f)$ is not simple at the origin, this formula becomes combinatorially complicated to use, which is precisely the case for our example $g$.
After cumbersome computation, we obtain $e_0=1$, $e_1=0$, $e_2=1$, $e_3=0$, $e_4=1$, $e_5=6$. This method also leads to weights of all $\beta_i$, while it cannot determine the exact values of all real roots.

Katz's 
calculation of the local monodromy of the Kloosterman sheaf provides the third strategy \cite{katz}. The rank $3$ Kloosterman sum is defined by
\begin{align*}
Kl(a,b,c) = \sum_{x_1,x_2\in \F^*_{q^k}}\psi\left(\Tr_k\left(ax_1+bx_2+\frac{c}{x_1x_2}\right)\right),
\end{align*}
where $a$,$b$,$c\in\F^*_{q}$, $\psi$ and $\Tr_k$ are as defined in this paper.
If the variable $x_5$ in $S^*_k(g)$ is fixed, the exponential sum symmetrically splits into a product of two rank $3$ Kloosterman sums, i.e.
\begin{align*}
S^*_k (g) = \sum_{x_5\in \F^*_{q^k}}Kl(a_1,a_2,a_5x_5)Kl(a_3,a_4,a_6x_5)\psi\left(\Tr_k\left(-x_5\right)\right).
\end{align*}
One then needs to calculate the weights of the cohomology on $G_m$ of 
the tensor product of two Kloosterman sheaves with an Artin-Schreier 
sheaf. This reduces to understanding the local monodromy at $0$ and 
infinity of this tensor product. 
Katz's strategy arrives at the same result as Wan's and is more advanced.

As a corollary of Theorem \ref{thm5}, we propose an estimation for the exponential sums $S_k(\vec{a})$.
\begin{Th}
	Notations as above. We have 
	\begin{align*}
		\left|S_k(\vec{a})\right| \leq 6q^{\frac{3k}{2}}+q^k+1.
	\end{align*}
\end{Th}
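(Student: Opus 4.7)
The plan is to obtain the estimate as an immediate consequence of Theorem~\ref{thm5}, which has already supplied the complete factorization
$$\LF(\vec{a},T) = (1-T)(1-qT)\prod_{i=1}^6 (1-\alpha_i T)$$
together with the archimedean sizes $|\alpha_i| = q^{3/2}$. All the substantial work --- the slope computation via facial decomposition and ordinarity, and the weight determination via boundary decomposition --- is already in place there; what remains is the routine step of translating the reciprocal roots of the $L$-function into the complex values $S_k(\vec{a})$.

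Concretely, I would take the formal logarithm of both sides of the factorization and match it against the defining identity $\log \LF(\vec{a},T) = \sum_{k \geq 1} S_k(\vec{a}) T^k/k$. Using the standard expansion $\log(1-\gamma T) = -\sum_{k \geq 1} \gamma^k T^k/k$ and comparing coefficients of $T^k$, this yields the closed form
$$S_k(\vec{a}) \;=\; -\,1 \;-\; q^k \;-\; \sum_{i=1}^{6} \alpha_i^k.$$
The desired bound then drops out directly from the archimedean triangle inequality combined with $|\alpha_i| = q^{3/2}$, giving $|S_k(\vec{a})| \leq 1 + q^k + 6\, q^{3k/2}$.

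There is, in truth, no serious obstacle here: the statement is a corollary whose sole purpose is to repackage the $L$-function data into a clean sup-norm estimate of the kind needed for the analytic number theory applications mentioned in Section~\ref{sec1}. The only bookkeeping point worth explicit verification is that the two trivial reciprocal roots $1$ and $q$ contribute exactly the summands $1$ and $q^k$ to the bound, with no hidden multiplicities affecting the constants; but this is transparent from the sign bookkeeping in the logarithmic identity above. If a sharper constant than $6$ were sought, one would need to exploit cancellations among the six non-trivial Frobenius conjugates $\alpha_i$, but that is unnecessary for the stated inequality.
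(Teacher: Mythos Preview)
Your proposal is correct and matches the paper's approach: the paper states this theorem as an immediate corollary of Theorem~\ref{thm5} with no separate proof, relying implicitly on the logarithmic-derivative identity (the general form of which already appears as Formula~(\ref{expo})) together with the triangle inequality, exactly as you outline.
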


Denef and Loeser showed in \cite{Denef1991} that the conjecture about the number of weight $k$ reciprocal roots of an $n$-dimensional polytope ($k\leq n \in \Z$) by Adolphson and Sperber \cite{Adolphson1990} is false for some 5-dimensional simplicial Newton polyhedron. They proved the existence of counterexample for $n=5$, but didn't propose an explicit one. Our example, in fact, describes the first explicit counterexample for Adolphson-Sperber's conjecture. 
\begin{Conj}[Adolphson-Sperber, \cite{Adolphson1990}]\label{conj9}
	Let $f \in \mathbf{F}_{q}\left[x_{1}, \ldots, x_{n},\left(x_{1} \ldots x_{n}\right)^{-1}\right]$ be nondegenerate and suppose $\operatorname{dim} \Delta(f)=n .$ Let $w_k$ be the number of roots of $\LF^*(f,T)$ with weight $k$, where $0\leq k\leq n$. Then
	\begin{equation*}
	w_{k}=(-1)^{k} \sum_{0 \in \sigma\ \text{face of}\ \Delta(f) \atop \dim \sigma \leq k}(-1)^{\dim \sigma}\left(F_{\sigma}(k)+F_{\sigma}(k-1)-\binom{n-\dim\sigma}{n-k+1}\right)(\dim \sigma) ! V(\sigma),
	\end{equation*}
	where $V(\sigma)$ is the volume of $\sigma$ normalized by the assumption that a fundamental domain of the lattice $\mathbb{Z}^n\cap(\text{affine space of }\sigma)$ has unit volume and $F_{\sigma}(i)$ is the number of $i$-dimensional faces of $\Delta(f)$ that contain $\sigma$.
\end{Conj}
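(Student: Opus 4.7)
The plan is to establish the conjectured formula by combining the Dwork--Adolphson--Sperber cohomological presentation of $\LF^*(f,T)^{(-1)^{n-1}}$ with a face-by-face Möbius inversion across Wan's boundary decomposition of $C(\Delta(f))$. First, using formula (\ref{eq24}), I would realize the reciprocal roots of $\LF^*(f,T)^{(-1)^{n-1}}$ as eigenvalues of the Frobenius operator $A_a(f)$ on the $p$-adic chain space spanned by $L(\Delta(f))$, and then factor $\det(I-TA_a(f))$ as $\prod_{\Sigma\in B(\Delta)}\det(I-TA_a(\Sigma,f^{\overline{\Sigma}}))$ through the boundary decomposition theorem. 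This reduces the global weight count to local weight counts on each cone $\Sigma$, whose closure is the cone over a face $\sigma$ of $\Delta(f)$ containing the origin.

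Next I would tackle the local contribution at a fixed face $\sigma$ of dimension $d$. Since $f^{\overline{\sigma}}$ is non-degenerate with Newton polyhedron $\overline{\sigma}$, Theorem~\ref{thm2} delivers a polynomial L-function of degree $d!\,V(\sigma)$ whose reciprocal roots have weights in $[0,d]$. Assuming the local cohomology is pure at the weights prescribed by the Hodge numbers $H_{\overline{\sigma}}(k)$, I would expand these through Definition~\ref{def5} in terms of $W_{\overline{\sigma}}(k-iD)$ and, after relating the weight function $w$ on $\sigma$ to the flag data of $\sigma$ inside $\Delta(f)$, recognize that the local contribution to $w_k$ is precisely the term $\bigl(F_\sigma(k)+F_\sigma(k-1)-\binom{n-d}{n-k+1}\bigr)\,d!\,V(\sigma)$ appearing in the conjectured sum. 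The combinatorial identity here is a generating-function manipulation tying lattice-point counts in a fundamental domain of the cone over $\sigma$ to the flag-counting polynomial $F_\sigma$.

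Finally, I would assemble the global weight count from the open-cone contributions. The open cone over $\sigma$ is a relatively open face of the closed cone over $\sigma$, so recovering the closed-face count from the open ones costs a sign $(-1)^{\dim\sigma}$ by inclusion-exclusion on the poset of faces through the origin. After accounting for the overall $(-1)^{n-1}$ in (\ref{eq24}), reindexing, and collecting terms by weight, the resulting telescoping sum produces $(-1)^k\sum_\sigma(-1)^{\dim\sigma}\bigl(F_\sigma(k)+F_\sigma(k-1)-\binom{n-d}{n-k+1}\bigr)d!\,V(\sigma)$, matching the conjecture.

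The hardest step, and the one place the argument can fail, is the local purity claim in the middle paragraph: the identification of the complex weight filtration on $H^*(\Sigma, f^{\overline{\Sigma}})$ with the combinatorial filtration determined by the lattice-point weight function $w$. When $\Delta$ is simple at the origin or $n\le 4$, this follows from the diagonal local theory of Subsection~\ref{subsec24} combined with Stickelberger's theorem, so the formula holds. In higher dimensions with non-simple polytopes, however, Denef and Loeser~\cite{Denef1991} showed using intersection cohomology that extra contributions from the singularities at the origin shift the naive weight count, and it is precisely this phenomenon that our $5$-dimensional $\Delta(g)$, which is non-simple at the origin, exploits to furnish an explicit counterexample.
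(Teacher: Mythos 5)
The statement you are asked about is a \emph{conjecture}, and the paper does not prove it --- on the contrary, the whole point of Section 3.2 is that this formula is \emph{false}: the paper evaluates the right-hand side for its $5$-dimensional polytope $\Delta(g)$ at $k=5$ and gets $w_5=7$, whereas the actual weight count (established via Wan's boundary decomposition in Theorem \ref{thm35}) is $w_5=6$. So there is no correct proof of this statement for you to have found, and your proposal cannot succeed as written. The fatal step is exactly the one you flag in your middle paragraph: the ``local purity'' identification of the complex weight filtration with the combinatorial filtration coming from the lattice-point weight function $w$ on each cone over a face through the origin. That identification is what Adolphson--Sperber's heuristic assumes and what Denef--Loeser \cite{Denef1991} showed can fail when $n\geq 5$ and $\Delta$ is not simple at the origin; the paper's $\Delta(g)$ is precisely such a polytope, and the discrepancy $7\neq 6$ is the quantitative failure of your purity assumption. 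A sketch that hinges on an assumption you yourself concede is false in the relevant generality is not a proof; at best it explains why the conjecture is plausible and where it breaks.

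To be fair, your outline of the correct machinery (formula (\ref{eq24}), the boundary decomposition over faces containing the origin, inclusion--exclusion with sign $(-1)^{\dim\sigma}$ over the face poset) is the same machinery the paper actually uses --- but the paper uses it to compute the true weights directly (Theorem \ref{thm35}), not to validate the conjectured closed formula. If your goal was to prove the conjecture, the proposal has a genuine, unfixable gap; if your goal was to explain the conjecture's provenance and its failure mode, you should say so explicitly and restructure the write-up as a disproof: carry out the boundary-decomposition computation for $\Delta(g)$ to get $e_5=6$, then evaluate the conjectural sum to get $7$, and conclude the formula is false.
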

Take $k=5$, we compute $w_5$ for our example using Adolphson-Sperber's Conjecture.
\begin{align*}
w_5 =& (-1)^{5} \sum_{\substack{0 \in \sigma\ \text{face of}\ \Delta\\ \dim \sigma \leq 5}}
(-1)^{\dim\sigma}\left(F_{\sigma}(5)+F_{\sigma}(4)-\binom{5-\dim\sigma}{1}\right)
(\dim \sigma) ! V(\sigma)\\
=& 9F_0(5)-F_0(4)(1+1-1)+F_0(3)(1+2-2)-(6(1+3-3)+9(1+4-3))\\
&+F_0(1)(1+6-4)-F_0(0)(1+9-5)\\
=& 7 \neq 6.
\end{align*}
This result contradicts with the one we obtained.
Namely, Conjecture \ref{conj9} fails in our example.

\nocite{*}
\bibliographystyle{amsalpha}
\bibliography{ref}


\section{Appendix}
Notations in this section follow from the ones in section \ref{sec3}.
Recall that $\Delta=\Delta(g)\in \R^5$ and $\delta_i(1\leq i \leq 9)$ denote codimension 1 faces of $\Delta$ not containing the origin. Vertices of $\delta_i$ are listed as follows.
\begin{gather*}
\delta_1:\quad \left\{\V_{1}, \V_{2}, \V_{3}, \V_{4}, \V_{5}\right\}\\
\delta_2:\quad \left\{\V_{1}, \V_{2}, \V_{3}, \V_{5}, \V_{7}\right\}\\
\delta_3:\quad \left\{\V_{1}, \V_{2}, \V_{4}, \V_{5}, \V_{7}\right\}\\
\delta_4:\quad \left\{\V_{1}, \V_{3}, \V_{4}, \V_{5}, \V_{6}\right\}\\
\delta_5:\quad \left\{\V_{1}, \V_{3}, \V_{5}, \V_{6}, \V_{7}\right\}\\
\delta_6:\quad \left\{\V_{1}, \V_{4}, \V_{5}, \V_{6}, \V_{7}\right\}\\
\delta_7:\quad \left\{\V_{2}, \V_{3}, \V_{4}, \V_{5}, \V_{6}\right\}\\
\delta_8:\quad \left\{\V_{2}, \V_{3}, \V_{5}, \V_{6}, \V_{7}\right\}\\
\delta_9:\quad \left\{\V_{2}, \V_{4}, \V_{5}, \V_{6}, \V_{7}\right\}
\end{gather*}

Let $\tau_k$ be faces of $\Delta$ containing the origin with $\dim\tau_k=k$. Let $F_0(k)$ be the number of $\tau_k$. Vertices of faces $\tau_k$ are listed in Table \ref{tab5}, in which $0$ denotes the origin $(0, 0, 0, 0, 0)$.
\allowdisplaybreaks
\begin{table}[htpb]
	\begin{tabularx}{13.5cm}{cXc}
		\toprule
		$k$\quad & Vertices of $\tau_k$ & $F_0(k)$\\
		\midrule
		$0$\quad & 0 & $1$ \\
		\midrule
		$1$\quad & $\left\{0, \V_{1}\right\}, \left\{0, \V_{2}\right\}, \left\{0, \V_{3}\right\}, \left\{0, \V_{4}\right\}, \left\{0, \V_{6}\right\}, \left\{0, \V_{7}\right\}$ & $6$ \\
		\midrule
		$2$\quad & $\left\{0, \V_{1}, \V_{2}\right\},\left\{0, \V_{1}, \V_{3}\right\},\left\{0, \V_{1}, \V_{4}\right\},\left\{0, \V_{1}, \V_{6}\right\},\left\{0, \V_{1}, \V_{7}\right\},$
		$\left\{0, \V_{2}, \V_{3}\right\},\left\{0, \V_{2}, \V_{4}\right\},\left\{0, \V_{2}, \V_{6}\right\},\left\{0, \V_{2}, \V_{7}\right\},\left\{0, \V_{3}, \V_{4}\right\},$
		$\left\{0, \V_{3}, \V_{6}\right\},\left\{0, \V_{3}, \V_{7}\right\},\left\{0, \V_{4}, \V_{6}\right\},\left\{0, \V_{4}, \V_{7}\right\},\left\{0, \V_{6}, \V_{7}\right\}$ & $15$ \\
		\midrule
		$3$\quad & $\left\{0, \V_{1}, \V_{2}, \V_{3}\right\},\left\{0, \V_{1}, \V_{2}, \V_{4}\right\},\left\{0, \V_{1}, \V_{2}, \V_{7}\right\},\left\{0, \V_{1}, \V_{3}, \V_{4}\right\},$
		$\left\{0, \V_{1}, \V_{3}, \V_{6}\right\},\left\{0, \V_{1}, \V_{3}, \V_{7}\right\},\left\{0, \V_{1}, \V_{4}, \V_{6}\right\},\left\{0, \V_{1}, \V_{4}, \V_{7}\right\},$
		$\left\{0, \V_{1}, \V_{6}, \V_{7}\right\},\left\{0, \V_{2}, \V_{3}, \V_{4}\right\},\left\{0, \V_{2}, \V_{3}, \V_{6}\right\},\left\{0, \V_{2}, \V_{3}, \V_{7}\right\},$
		$\left\{0, \V_{2}, \V_{4}, \V_{6}\right\},\left\{0, \V_{2}, \V_{4}, \V_{7}\right\},\left\{0, \V_{2}, \V_{6}, \V_{7}\right\},\left\{0, \V_{3}, \V_{4}, \V_{6}\right\},$
		$\left\{0, \V_{3}, \V_{6}, \V_{7}\right\},\left\{0, \V_{4}, \V_{6}, \V_{7}\right\}$  & $18$ \\
		\midrule
		$4$\quad & $\left\{0, \V_{1}, \V_{2}, \V_{3}, \V_{4}\right\},\left\{0, \V_{1}, \V_{2}, \V_{3}, \V_{7}\right\},\left\{0, \V_{1}, \V_{2}, \V_{4}, \V_{7}\right\},$
		$\left\{0, \V_{1}, \V_{3}, \V_{4}, \V_{6}\right\}, \left\{0, \V_{1}, \V_{3}, \V_{6}, \V_{7}\right\},\left\{0, \V_{1}, \V_{4}, \V_{6}, \V_{7}\right\},$
		$\left\{0, \V_{2}, \V_{3}, \V_{4}, \V_{6}\right\},\left\{0, \V_{2}, \V_{3}, \V_{6}, \V_{7}\right\},\left\{0, \V_{2}, \V_{4}, \V_{6}, \V_{7}\right\}$  & $9$ \\
		\bottomrule
	\end{tabularx}
	\caption{Vertices of $\tau_k$}\label{tab5}
\end{table}
\end{document}